\thanks{L.H.~is supported by the German Research Foundation in the Bonn
International Graduate School in Mathematics (BIGS), and the Collaborative
Research Center 1060 ``The Mathematics of Emergent Effects''. The authors thank
their home institutions for hospitality.
}} % \thanks is optional. Insert line breaks with \\
\def\paragraph#1{\noindent \textbf{#1}}
\numberwithin{equation}{section}
\def\ddd{\mathrm{d}}
\def\eee{\mathrm{e}}
\def\<{\langle}
\def\>{\rangle}
\def\a{\alpha}
\def\b{\beta}
\def\e{\epsilon}
\def\g{\gamma}
\def\l{\lambda}
\def\s{\sigma}
\def\t{\tau}
\def\D{\Delta}
\def\R{{\Bbb R}}
\def\N{{\Bbb N}}
\def\P{{\Bbb P}}
\def\Z{{\Bbb Z}}
\def\C{{\Bbb C}}
\def\E{{\Bbb E}}
\let\cal=\mathcal
\def\DD{{\cal D}}
\def\EE{{\cal E}}
\def\GG{{\cal G}}
\def\SS{{\cal S}}
\def\TT{{\cal T}}
\def\UU{{\cal U}}
\def\XX{{\cal X}}
\def \b {{\beta}}
\def \s {{\sigma}}
\def \D {{\Delta}}
\def \t {{\tau}}
\def \g {{\gamma}}
\def \l {{\lambda}}
\def \d {{\delta}}
\def \a {{\alpha}}
\def \ba {\begin{array}}
\def \ea {\end{array}}
\newcommand{\be}{\begin{equation}}
\newcommand{\ee}{\end{equation}}
\newcommand{\bea}{\begin{eqnarray}}
\newcommand{\eea}{\end{eqnarray}}
\def\TH(#1){\label{#1}}\def\thv(#1){\ref{#1}}
\def\Eq(#1){\label{#1}}\def\eqv(#1){(\ref{#1})}
\def\cov{\hbox{\rm Cov}}
\def\sfrac#1#2{{\textstyle{#1\over #2}}}
\def \1{\mathbbm{1}}
\def\wt {\widetilde}
\def\eee{\hbox{\rm e}}
\begin{document}

\section{Introduction}
\label{sec:intro}

Phase transitions arise via an analyticity breaking of the logarithm of the
partition function (see, e.g., Ruelle~\cite{Ruelle1969}). To analyse this
phenomenon, the study of partition functions at \textit{complex temperatures} is
of a key interest, as was observed by Lee and Yang~\cite{YL52, LY52}. Another
motivation to study complex-valued Hamiltonians comes from quantum physics.
There, partition functions with complex energies emerge naturally, e.g., from
the Schrödinger equation via ``imaginary time'' Feynman's path integrals.

It is believed that large classes of models of disordered systems fall in the
same universality class and, in particular, share the same shape of the phase
diagram. Random energy models were proven to be useful in exploring universality
classes in mean-field disordered systems, see, e.g., 
Bovier~\cite{BovStatMech}, Panchenko~\cite{PanchenkoBook2013} and
Kistler~\cite{Kistler2014}. A number of random energy models with complex
energies has been considered in the literature. One of the simplest such models
(in terms of the correlation structure of the energies) is the so called {\it
Random Energy Model} (REM). For this model, the analyticity of the log-partition
function was studied in the seminal work by Derrida~\cite{DerridaComplexREM1991}
and later by Koukiou~\cite{koukiou}. The full phase diagram of this model at
complex temperatures including the fluctuations and zeros of the partition
function were identified by Kabluchko and one of us in \cite{KaKli14}. In
particular, the case of arbitrary correlations between the imaginary and real
parts of the energies was considered in \cite{KaKli14}. The same authors
answered in \cite{KaKli14_G} similar questions about the \textit{Generalized
Random Energy model} (GREM) -- a model with hierarchical correlations -- and
obtained the full phase diagram. In the complex GREM, the phase diagram turned
out to have a much richer structure than that of the complex REM. This sheds
some light on the phase diagrams of the models beyond the complex REM
universality class.

It is known that models with \textit{logarithmic correlations} between the
energies are at the borderline of the REM universality class. In particular,
they are expected to have the same phase diagram. This has been shown for
directed polymers on a tree  with complex-valued energies by Derrida, Evans, and
Speer~\cite{DerridaEvansSpeer1993}, and for a model of complex multiplicative
cascades by Barral, Jin, and Mandelbrot~\cite{BarralJinMandelbrot2010}. Lacoin,
Rhodes, and Vargas~\cite{LRV14} analysed the phase diagram for complex
\textit{Gaussian multiplicative chaos} -- a model with logarithmic correlations
between the energies on a Euclidean space. There, only the case without
correlations between the imaginary and real parts of the energy was treated. It
turned out that the phase diagram coincides with the REM one, see
Figure~\ref{fig-rem-phase-diagram}.

\begin{SCfigure}
\centering \includegraphics[width=0.6\textwidth]{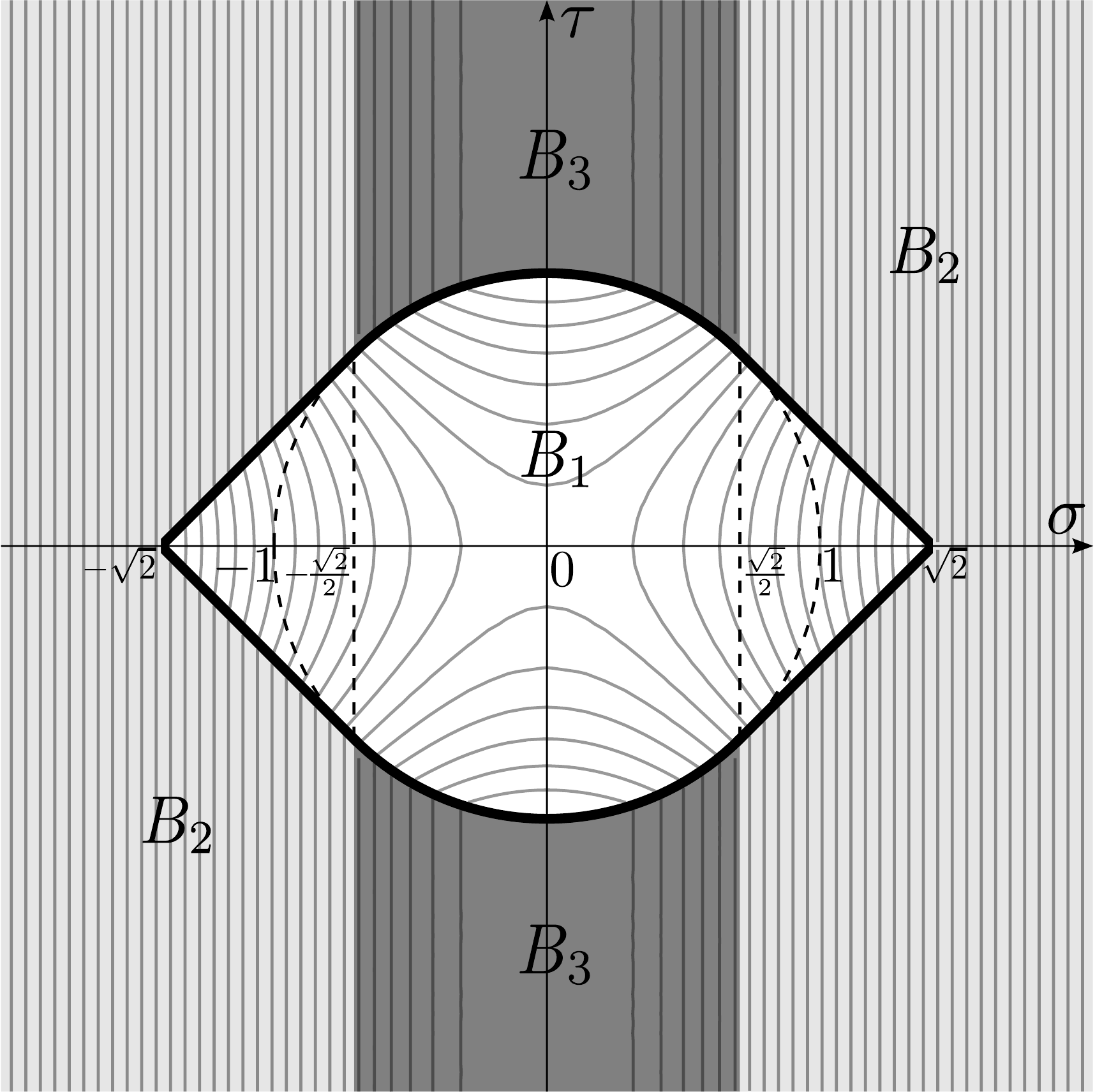}\caption{Phase diagram of the REM (and conjecturally of the BBM energy model).
The grey curves are the level lines of the limiting log-partition function,
cf.~\eqref{eq:limiting-log-partition-function}. 
This paper mainly deals with phase $B_2$.} \label{fig-rem-phase-diagram}
\end{SCfigure}

In~\cite{LRV14}, the analysis of the so-called ``glassy'' phase $B_2$, see
Figure~\ref{fig-rem-phase-diagram},  was left open. In this phase, the partition
function is dominated by the extreme values of the energies. Phase $B_2$ was
analysed by Madaule, Rhodes, and Vargas~\cite{MRV13} in a continuous model with
logarithmic correlations on a tree -- the complex \textit{BBM energy model}, but
again only when the imaginary and real parts of the energies are uncorrelated.
In this model, a deeper understanding of phase $B_2$ is possible due to recent
progress in the analysis of the extremal process of BBM by
A\"\i{}d\'ekon, Berestycki, Brunet, and Shi~\cite{ABBS} and Arguin, Bovier, and
Kistler~\cite{ABK_E}. Madaule, Rhodes, and Vargas~\cite{Madaule2015Continuity},
have recently analysed the behaviour of the partition function on the boundary
between phases $B_1$ and $B_2$ (see Figure~\ref{fig-rem-phase-diagram}).

In this article, we extend the result of \cite{MRV13}. Specifically, we prove
the weak convergence of the (rescaled) partition function of the complex BBM
energy model in phase $B_2$ to a non-trivial distribution. We allow for
arbitrary correlations between the real and imaginary parts of the energy. In
particular, this covers the complex temperature case, in which the real and
imaginary parts of the random energies have maximal correlation (i.e., they are
a.s.~equal). This case is especially relevant for the Lee-Yang program.

\subsection{Branching Brownian motion.} 

Before stating our results, let us briefly recall the construction of a BBM.
Consider a canonical continuous branching process: a \textit{continuous time
Galton-Watson} (GW) process \cite{AN}. It starts with a single particle at time
zero. After an exponential time of parameter one, this particle splits into $k
\in \Z_+$ particles according to some probability distribution $(p_k)_{k \geq
0}$ on $\Z_+$. Then, each of the new-born particles splits independently at independent
exponential (parameter $1$) times again according to the same $(p_k)_{k \geq 0}$, and so on. We
assume that $\sum_{k=1}^\infty p_k=1.$\footnote{This implies that $p_0 = 0$, so
none of the particles ever dies.} In addition, we assume that
$\sum_{k=1}^\infty k p_k=2$ (i.e., the expected number of children per particle
equals two)\footnote{The latter assumption is just a matter of normalization.
Any expected number of children greater than $1$ (= the supercritical regime) is
allowed and the results of this paper remain valid with appropriate
modifications of constants.}. Finally, we assume that $K :=
\sum_{k=1}^\infty k(k-1)p_k<\infty$ (finite second
moment)\footnote{\label{note:3}Under the stated conditions, the convergence of
the extremal process of BBM, on which we rely, is proven in \cite{ABK_E}. For
the case of branching random walk, using truncation techniques, Madaule
\cite{Madaule2011} has shown the same under conditions that would in the
Gaussian case imply finiteness of $\sum_kp_kk (\ln k)^3$.  This could probably
be carried over to BBM. It is not clear whether the result holds under the
Kesten-Stigum  condition $\sum_kp_kk \ln k<\infty$. For a discussion on these
issues, we refer to the lecture notes by Shi~\cite{Shi2015}. In the present
paper, we are not concerned with improving the conditions on the offspring
distribution.}. At time $t = 0$, the GW process starts with just one particle.

For given $t \geq 0$, we label the particles of the process as $i_1(t),\dots,
i_{n(t)}(t)$, where $n(t)$ is the total number of particles at time $t$. Note
that under the above assumptions, we have $\E\left[ n(t) \right]=\eee^t$. For $s
\leq t$, we denote by $i_k(s,t)$ the unique ancestor of particle $i_k(t)$ at
time $s$. In general, there will be several indices $k, l$ such that $i_k(s,t) =
i_l(s,t)$. For $s, r \leq t$, define the time of the most recent common ancestor
of particles $i_k(r,t)$ and $i_l(s,t)$ as
\begin{align}
\label{eq:intro:gw-overlap}
d(i_k(r,t), i_l(s,t))
:= \sup \{u \leq s \wedge r \colon i_k(u,t) = i_l(u,t)\}.
\end{align} 
For $t \geq 0$, the collection of all ancestors naturally induces the random tree
\begin{align}
\label{eq:gw-tree}
\mathbb{T}_t := \{i_k(s,t) \colon 0 \leq s \leq t, 1\leq k \leq n(t) \}
\end{align}
called the \textit{GW tree up to time $t$}. We denote by $\mathcal{F}^{\mathbb{T}_t}$
the $\s$-algebra  generated by the GW process up to time $t$.

In addition to the genealogical structure, the particles get a
\textit{position} in $\R$. Specifically, the first particle starts at the origin
at time zero and performs Brownian motion until the first time when the GW
process branches. After branching, each new-born particle independently performs
Brownian motion (started at the branching location) until their respective next
branching times, and so on. We denote the positions of the $n(t)$ particles at
time $t \geq 0$ by $x_1(t),\dots, x_{n(t)}(t)$ and by $x_1(s,t),\dots,
x_{n(t)}(s,t)$ the positions of their ancestors at time $s \geq 0$.

We define BBM as a family of Gaussian processes, 
\begin{align}
\label{eq:bbm-process}
x_t := \{ x_1(s,t),\dots, x_{n(t)}(s,t) \colon s \leq t \}
\end{align}
indexed by time horizon $t \geq 0$. Note that conditionally on the underlying GW tree these Gaussian processes have
the following covariance
\begin{align}
\E \left[x_k(s,t) x_l(r,t) \mid \mathcal{F}^{\mathbb{T}_t} \right] = d(i_k(s,t), i_l(r,t)), \quad  s, r \in [0,t], \quad k,l \leq n(t)
.
\end{align}
Bramson~\cite{B_C,B_M} showed that
\begin{align}
\label{eq:extremal-scaling}
m(t):=\sqrt 2 t-\frac{3}{2\sqrt 2}\log t
\end{align}
is the order of the maximal position among all BBM particles alive at large time $t$, i.e.,
\begin{align}
\label{eq:shifted-gumbel}
\lim_{t \uparrow \infty} \P \left\{ \max_{k \leq n(t)} x_k(t) - m(t) \leq y \right\} = \E \left[ \eee^{-C Z \eee^{-\sqrt{2} y}} \right], \quad y \in \R
,
\end{align}
where $C > 0$ is a constant and  $Z$ is the a.s.~limit of the so-called \textit{derivative martingale}:
\begin{align}
Z := \lim_{t \uparrow \infty} \sum_{k=1}^{n(t)} (\sqrt{2} t - x_k(t)) \eee^{-\sqrt{2}(\sqrt{2}t-x_k(t))}, \quad \text{a.s.}
\end{align}
In \cite{ABBS,ABK_E}, as $t \uparrow \infty$, the non-trivial limiting point
process of the (shifted by $m(t)$) particles of BBM was identified.
Specifically, it was shown that the point process,
\begin{align}\Eq(extremal.1)
\EE_t := \sum_{k=1}^{n(t)}\d_{x_k(t)-m(t)}, \quad t \in \R_+
\end{align}
converges in law as $t\uparrow \infty$ to the point process
\begin{align}\Eq(extremal.2)
\EE := \sum_{k,l}\d_{\eta_k+\D^{(k)}_l},
\end{align}
where:
\begin{itemize}

\item[(a)] $\{\eta_k \}_{k \in \N} \subset \R$ are the atoms of a Cox process
with \textit{random intensity measure} $CZ \eee^{-\sqrt 2 y}\ddd y$, where $C$
and $Z$ are the same as in \eqref{eq:shifted-gumbel}.

\item[(b)] $\{\D^{(k)}_l \}_{l \in \N} \subset \R$ are the atoms of independent
and identically distributed point processes $\D^{(k)}$, $k  \in \N$ called
\textit{clusters} which are independent copies of the limiting point process
\begin{align}\Eq(extremal.3)
\D := \lim_{t\uparrow \infty}
\sum_{k=1}^{n(t)}\d_{\hat x_k(t)-\max_{l\leq n(t)}\hat x_l(t)}
\end{align}
with $\hat x(t)$ being BBM $x(t)$ conditioned on $\max_{k\leq n(t)} x_k(t)\geq \sqrt 2 t$.

\end{itemize}

\subsection{Branching Brownian motion energy model at complex temperatures  with arbitrary correlations}

Let  $\rho \in [-1,1]$. For any $t \in \R_+$, let $X(t) := (x_k(t))_{k\leq
n(t)}$ and $Y(t) :=(y_k(t))_{k\leq n(t)} $ be two BBMs with the same underlying
GW tree such that, for $k \leq n(t)$,
\begin{align}
\cov(x_k(t),y_k(t))=|\rho| t
.
\end{align}
Then,  
\begin{align}\Eq(cor.1)
Y(t)\overset{\mathrm{D}}{=} \rho X(t)  +\sqrt{1-\rho^2} Z(t)
,
\end{align}
where ``$\overset{\mathrm{D}}{=}$'' denotes equality in distribution and $Z(t) :=
(z_i(t))_{i\leq n(t)}$ is a branching Brownian motion with the same underlying
GW process which is independent from $X(t)$. 
Representation $\eqv(cor.1)$ allows us to handle arbitrary correlations by
decomposing the process $Y$ into a part independent from $X$ and a fully
correlated one.

We define the partition function for the complex
BBM energy model with correlation $\rho$ at inverse temperature $\b := \s+i\t\in\C $ by
\begin{align}
\Eq(real.1)
\wt \XX_{\b,\rho}(t) := \sum_{k=1}^{n(t)}\eee^{\s x_k(t)+i\t y_k(t)}
.
\end{align}

\subsection{Main results} 

Let us specify the three phases depicted on Figure~\ref{fig-rem-phase-diagram} analytically:
\begin{equation}
\begin{aligned}
B_{1}
:=
\C \setminus \overline{B_2\cup B_3},
\quad
&
B_{2}
:=
\{ \sigma + i \tau \in \C \colon 2\sigma^2 >  1, |\sigma|+|\tau| > \sqrt {2}\}
,
\\
&
B_{3}
:=
\{ \sigma + i \tau  \in \C \colon 2\sigma^2 < 1, \sigma^2+\tau^2> 1\}.
\end{aligned}
\end{equation}

In this paper, we focus on the \emph{glassy phase} $B_2$. We start with the convergence
of the partition function in the case of the real BBM energy model at complex
temperatures. We say that a complex-valued r.v.\ $Y$ is \textit{isotropic
$\alpha$-stable} if there exists $c \in \R_+$ and $\alpha \in (0,2]$ such that
\begin{align}
\E [\eee^{i \mathrm{Re} (\bar{z} Y)}] = \eee^{-c |z|^\alpha}, \quad \text{for all } z \in \C
.
\end{align}
Recall the notation from \eqref{extremal.2}.
\begin{theorem}[Partition function fluctuations for $|\rho| = 1$]\TH(thm.real)
For $\b=\s+i\t \in B_2$, the rescaled partition function
$
\XX_{\b,1}(t):= \eee^{-\b m(t)} \wt \XX_{\b,1}(t)
$
converges in law to the r.v.
\begin{align}\Eq(real.2)
\XX_{\b,1}:=\sum_{k,l \geq 1} \eee^{\b \left(\eta_k+\D_l^{(k)}\right)}, \quad \text{as $t\uparrow \infty$.}
\end{align}
\end{theorem}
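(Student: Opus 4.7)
Since $|\rho|=1$, we may assume $\rho=1$ (replacing $\beta$ by $\bar\beta$ if necessary), so that $y_k(t)=x_k(t)$ a.s.\ and
\begin{align}
\XX_{\beta,1}(t)=\sum_{k=1}^{n(t)}\eee^{\beta(x_k(t)-m(t))}=\int_{\R}\eee^{\beta y}\,\EE_t(\ddd y).
\end{align}
The plan is to pass to the limit inside this integral using the convergence $\EE_t\Rightarrow\EE$ from \cite{ABK_E} recalled in \eqref{extremal.1}--\eqref{extremal.3}. Since the test function $f_\beta(y):=\eee^{\beta y}$ is not compactly supported, the natural approach is a truncation at height $-A$: first establish convergence in law of the truncated partition function, then show that the truncation error vanishes as $A\to\infty$ uniformly in $t$.

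For $A>0$, set
\begin{align}
\XX^A(t):=\sum_{k:\,x_k(t)-m(t)\ge-A}\eee^{\beta(x_k(t)-m(t))},
\qquad
\XX^A:=\sum_{k,l:\,\eta_k+\D^{(k)}_l\ge-A}\eee^{\beta(\eta_k+\D^{(k)}_l)}.
\end{align}
The Cox intensity $CZ\eee^{-\sqrt 2 y}\,\ddd y$ is integrable on $[-A,\infty)$, and each cluster $\D^{(k)}$ contributes only finitely many atoms above $-A$ a.s.\ (the cluster process $\D$ is locally finite with maximum zero), so $\XX^A$ is an a.s.-finite sum. The functional $\mu\mapsto\int_{[-A,\infty)}\eee^{\beta y}\mu(\ddd y)$ is continuous on Radon point measures with finitely many atoms in $[-A,\infty)$ that put no mass at $-A$. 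Choosing $A$ such that $-A$ is a.s.\ a continuity point of $\EE$ (a dense set of $A$ in $\R_+$), the continuous mapping theorem combined with $\EE_t\Rightarrow\EE$ yields $\XX^A(t)\Rightarrow\XX^A$.

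The principal step, and the main obstacle, is the uniform tail estimate
\begin{align}
\lim_{A\to\infty}\limsup_{t\to\infty}\P\bigl(|\XX_{\beta,1}(t)-\XX^A(t)|>\varepsilon\bigr)=0,\qquad\varepsilon>0.
\end{align}
A direct second moment is not available: a many-to-two computation gives $\E|\XX_{\beta,1}(t)|^2\asymp\eee^{(\sqrt 2\sigma-1)^2 t}$, which diverges throughout $B_2$ (where $\sigma\ne1/\sqrt 2$). To bypass this one must exploit the phase cancellations from $\tau$ together with the branching structure. The approach I would take is a further barrier truncation: split the sum over $k$ according to whether the trajectory of $x_k(\cdot,t)$ stays below the typical tube $\sqrt 2 s - f(s)$ up to some intermediate time $s=s(t,A)$, or instead ventures into the top region of order $m(t)$. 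Particles of the latter type are, for $A$ large, precisely those entering $\XX^A(t)$, while the bulk contribution is controlled by a truncated second moment, classifying pairs by their common ancestor time $d(i_k,i_l)$; the many-to-two integrand $\eee^{(\sigma^2-\tau^2)t+|\beta|^2 d}$ combines with the ballot-type barrier estimates of \cite{ABK_E} to yield, under the defining conditions $2\sigma^2>1$ and $|\sigma|+|\tau|>\sqrt 2$ of $B_2$, a bound that decays geometrically in $A$.

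Combining the truncated convergence with the uniform tail bound via a standard three-step approximation gives $\XX_{\beta,1}(t)\Rightarrow\lim_{A\to\infty}\XX^A$. Identifying this limit with $\XX_{\beta,1}:=\sum_{k,l}\eee^{\beta(\eta_k+\D^{(k)}_l)}$ then reduces, via Campbell's formula in the Poisson cluster representation of $\EE$, to a.s.\ finiteness of a single cluster partition function $\sum_l\eee^{\beta\D_l}$; this follows once again from the $B_2$ hypothesis together with the exponential decay of cluster atoms established in \cite{ABK_E}.
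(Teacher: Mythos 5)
Your proposal follows essentially the same route as the paper: truncate the extremal process at level $-A$, apply the continuous mapping theorem and Slutsky's theorem to the truncated functional, and control the truncation error via the upper envelope of \cite{ABK_G} combined with a second-moment (many-to-two) estimate that exploits the $\tau$-phase cancellation and classifies pairs of particles by their most recent common ancestor time; this is precisely the strategy of the paper's Proposition~\thv(Prop.nocont). The only notable difference is organizational: the paper splits that proposition into two sub-cases, using a simpler first-moment bound when $\sigma > \sqrt 2$ and reserving the second-moment/phase-cancellation argument you sketch for $\sqrt 2/2 < \sigma \le \sqrt 2$, and then carries out the common-ancestor integral in three explicit regimes --- details your outline leaves implicit but whose overall strategy you correctly identify.
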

\begin{theorem}[Partition function fluctuations for $|\rho| \in (0,1)$]\TH(thm.real2)

For $\b=\s+i\t \in B_2$ and $|\rho|\in (0,1)$, the
rescaled partition function
$
\XX_{\b,\rho}(t):= \eee^{-\s m(t)}\wt \XX_{\b,\rho}(t)
$
converges in law to the r.v. $ \XX_{\b,\rho}$, as $t\uparrow \infty$.
Conditionally on $Z$, $ \XX_{\b,\rho}$ is a complex isotropic $\sqrt
2/\s$-stable r.v.
\end{theorem}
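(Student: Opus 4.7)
The plan is to decouple $Y$ from $X$ via the representation \eqv(cor.1) and to exploit the fact that the independent BBM $Z$ injects, on the extremal scale of $X$, an essentially uniform random phase into each cluster contribution. With $\b':=\s+i\t\rho$ and $\t':=\t\sqrt{1-\rho^2}$, one rewrites
\[
\XX_{\b,\rho}(t)=\eee^{i\t\rho m(t)}\sum_{k=1}^{n(t)}\eee^{\b'(x_k(t)-m(t))}\,\eee^{i\t' z_k(t)}.
\]
Given the $X$-BBM together with the genealogy $\mathbb{T}_t$, the $z_k(t)$ are centred Gaussians with covariances $d(i_k,i_l)$, and the deterministic phase $\eee^{i\t\rho m(t)}$ is harmless because the candidate limit will turn out to be rotationally invariant. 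Inside $B_2$, only particles within $O(1)$ of $m(t)$ contribute to the sum in the limit, so the extremal-process machinery of \cite{ABK_E} applies.

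Next, I would fix a cut-off $s>0$ and decompose $z_k(t)=\bar z_k^{(s)}+\zeta_k^{(s)}$ into the $z$-position of the ancestor at time $t-s$ and the Brownian increment over $[t-s,t]$. Grouping the extremal $X$-particles into clusters $c$ according to their common ancestor at time $t-s$, each $\bar z_k^{(s)}$ is constant on its cluster, equal to a single centred Gaussian $\bar z_c^{(s)}$ of variance $t-s$, with values independent across clusters. Thus
\[
\XX_{\b,\rho}(t)\ \approx\ \eee^{i\t\rho m(t)}\sum_c \eee^{i\t' \bar z_c^{(s)}}\,W_c^{(s)}(t),\qquad W_c^{(s)}(t):=\sum_{k\in c}\eee^{\b'(x_k(t)-m(t))}\,\eee^{i\t' \zeta_k^{(s)}}.
\]
Since $\bar z_c^{(s)}$ is independent of the cluster structure and of $\zeta^{(s)}$, and has variance $t-s\to\infty$, the phases $\eee^{i\t'\bar z_c^{(s)}}$ become asymptotically uniform on the unit circle and asymptotically independent of $W_c^{(s)}(t)$.

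The joint extremal-process convergence of \cite{ABK_E}, applied to the coupled pair $(X,Z)$ that shares the GW tree, then yields, for $t\uparrow\infty$ at fixed $s$, that $W_c^{(s)}(t)$ converges in law to $W_c^{(s)}:=\sum_l \eee^{\b'(\eta_c+\D_l^{(c)})}\,\eee^{i\t' \xi_l^{(c)}}$, where the $\xi_l^{(c)}$ are bounded-variance within-cluster $z$-fluctuations, independent across $c$; letting $s\uparrow\infty$ produces a well-defined cluster weight $W_c$. Combining the convergences, the limit reads $\XX_{\b,\rho}=\sum_c W_c\,U_c$, with the $U_c$ iid uniform on the unit circle and independent of the $W_c$. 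Conditionally on the derivative martingale $Z$, the $\eta_c$ form a Poisson process of intensity $CZ\eee^{-\sqrt 2 y}\ddd y$, so $|W_c|\asymp \eee^{\s\eta_c}$ has Pareto tail of index $\a:=\sqrt 2/\s\in(0,2)$ by the $B_2$ condition $2\s^2>1$. The classical stable limit theorem for iid rotation-invariant summands with regularly varying modulus then identifies the conditional law of $\XX_{\b,\rho}$ given $Z$ as a complex isotropic $\a$-stable distribution.

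The principal obstacle is the joint extremal convergence for the coupled pair $(X,Z)$ together with tightness uniform in the cut-off $s$. One must verify that the cluster extraction of \cite{ABK_E}, typically stated for a single BBM, extends to the pair sharing the genealogy, and that the within-cluster $Z$-fluctuations $\xi_l^{(c)}$ produce a non-degenerate joint limit, independent of the ancestral phases $\bar z_c^{(s)}$. The second delicate point is the tail control of the cluster sums $\sum_l\eee^{\s\D_l^{(c)}}$ that is needed to apply the stable limit theorem at the correct exponent $\a=\sqrt 2/\s$. Once these estimates and the joint convergence are in place, the asymptotic independence of the random phases from the cluster data follows cleanly and the stable identification is routine.
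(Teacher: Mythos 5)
Your overall strategy — decouple via \eqv(cor.1), exploit the joint extremal convergence of the $X$-positions with the independent $Z$-phases, and identify the limit as a Poisson-cluster sum with iid uniform phases giving an isotropic $\sqrt{2}/\s$-stable law conditionally on $Z$ — is the same as the paper's. Where you diverge is that the paper simply cites \cite[Lemma~3.2]{MRV13} for the convergence of the bivariate point process $\sum_k \d_{(x_k(t)-m(t),\,\exp(i\sqrt{1-\rho^2}\t z_k(t)-i\rho\t m(t)))}$ to the cluster process \eqv(real.12) with iid uniform ancestral phases $U^{(k)}$, whereas you propose to re-derive it from scratch via a genealogical cut-off at $t-s$; that is possible in principle but duplicates work already in \cite{MRV13} and introduces the extra problem of tightness uniformly in $s$, which the paper's truncation-at-level-$-A$ route avoids.

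The genuine gap is your treatment of that tightness. You assert that ``inside $B_2$, only particles within $O(1)$ of $m(t)$ contribute to the sum in the limit'' as if it were a known fact. It is not: it is exactly Proposition~\thv(Prop.nocont), and it is the paper's main technical contribution. In the sub-phase $\sqrt{2}/2 < \s \leq \sqrt{2}$, $\s+\t > \sqrt{2}$, a first-moment bound on the discarded tail fails, since
$\E\bigl[\sum_{k\le n(t)} \eee^{\s(x_k(t)-m(t))}\1_{\{x_k(t)-m(t)<-A\}}\bigr]$
grows like $\exp\bigl(t(\s-\sqrt 2)^2/2\bigr)\to\infty$ when $\s<\sqrt 2$. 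One must instead bound $\E[|\cdot|^2]$, condition on $\mathcal{F}^{\mathbb{T}_t}$, and exploit the phase cancellation
$\E[\eee^{i\sqrt{1-\rho^2}\t(z_k(t)-z_l(t))}\mid \mathcal{F}^{\mathbb{T}_t}] = \eee^{-(1-\rho^2)\t^2(t-q_{k,l})}$
to gain exponential decay in the time since the most recent common ancestor, then integrate over $q_{k,l}$ against the Brownian-bridge envelope of \cite{ABK_G} in three separate regimes. That is the entire content of Section~\ref{sec:proof-of-proposition}, and without it there is no Slutsky step, so the continuous-mapping argument alone does not yield the claimed weak limit. By contrast, your second flagged ``delicate point'' (tail control of the cluster weights $W^{(k)}$) is mild: once the bivariate extremal-process convergence is granted, the cluster sums are a.s.\ finite and the Poisson intensity $CZ\eee^{-\sqrt 2 y}\ddd y$ directly produces the Pareto index $\sqrt 2/\s$.
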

\begin{remark} For $\rho = 0$, Theorem~\ref{thm.real2} was proven in \cite{MRV13}. 
Our proof uses a representation of correlated real and imaginary parts in terms
of independent BBM's. As in \cite{MRV13}, we control second moments. However,
the way we do this is different and simpler then the method used in that paper,
which relies on decomposing the paths of the BBM particles according to the time
and location of the minimal position along the given path. Our approach uses
instead the upper envelope for ancenstral paths that was obtained in
\cite{ABK_G}.
\end{remark}

\begin{remark}
Note that the fluctuations of the partition function in the complex BBM energy
model (cf., Theorems~\ref{thm.real}, \ref{thm.real2}) are governed by the
extremal process $\mathcal{E}$. Thus, the fluctuations are different from the
ones in the complex REM \cite[Theorems~2.8, 2.20]{KaKli14} which are governed by
a Poisson point process. Despite the differences in fluctuations, we conjecture
that in the limit as $t \uparrow \infty$ the \textit{log-partition function}
\begin{align}
\label{eq:free-energy}
p_t(\beta) :=  \frac{1}{t} \log |\wt \XX_{\b,\rho}(t)|,
\quad t \in \R_+, \quad \beta \in \C
\end{align}
of the complex BBM energy model is the same as in the complex REM.
\end{remark}

\begin{conjecture}[Phase diagram]
\label{conj:phase-diagram}
For any $\rho \in [-1,1]$, the complex BBM energy model has the same free energy
and the phase diagram (cf., \textup{Figure~\ref{fig-rem-phase-diagram}}) as the
complex REM, i.e.,
\begin{align}
\label{eq:limiting-log-partition-function}
\lim_{t \uparrow \infty} p_t(\beta) 
=:
p(\beta) =
\begin{cases}
1 + \frac{1}{2}(\sigma^2 - \tau^2), & \beta \in \overline{B_1},
\\
\sqrt{2}|\sigma|, & \beta \in \overline{B_2},
\\
\frac{1}{2}+\sigma^2, & \beta \in \overline{B_3},
\end{cases}
\end{align}
and the convergence in \eqref{eq:limiting-log-partition-function} holds in
probability and in $L^1$.
\end{conjecture}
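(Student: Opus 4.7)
The plan is to establish the three formulas for $p(\b)$ separately on the open phases $B_1$, $B_2$, $B_3$, and then to extend them to the closures by continuity, upgrading convergence in probability to $L^1$ via uniform integrability (obtained from the first moment in $B_1$, from the tightness supplied by Theorems~\ref{thm.real}--\ref{thm.real2} in $B_2$, and from a second-moment bound in $B_3$).

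\emph{Phase $B_2$} is the easiest. Theorems~\ref{thm.real} and \ref{thm.real2} say that $\eee^{-\s m(t)}|\wt\XX_{\b,\rho}(t)|$ converges in law (for $\s>0$) to a random variable that is a.s.\ strictly positive on the full-probability event $\{Z>0\}$. Taking logarithms, $\frac{1}{t}\log|\wt\XX_{\b,\rho}(t)| = \frac{\s m(t)}{t} + o_\P(1) \to \sqrt{2}\,\s$ in probability. For $\s<0$ the symmetry of BBM under $X\rightsquigarrow-X$, which swaps the roles of maximum and minimum and leaves $|\rho|$ unchanged, reduces to the $\s>0$ case and yields $p(\b) = \sqrt{2}\,|\s|$ throughout $B_2$.

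\emph{Phase $B_1$} yields to the classical second-moment method. Using the representation $\eqv(cor.1)$ to decouple $Y$ into its $X$-measurable part and an independent BBM, a many-to-two computation shows that pairs $(k,l)$ with most recent common ancestor at time $s$ contribute $\eee^{(\s^2-\t^2)t+(\s^2+\t^2)s}$ to $\E|\wt\XX_{\b,\rho}(t)|^2$, while their expected number grows like $\eee^{2t-s}$. Inside $B_1$ one has $\s^2+\t^2<1$, so the integral over $s\in[0,t]$ is $O(1)$ and $\E|\wt\XX_{\b,\rho}(t)|^2 \asymp |\E\wt\XX_{\b,\rho}(t)|^2 \asymp \eee^{(2+\s^2-\t^2)t}$. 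A Paley--Zygmund bound combined with the branching self-similarity of BBM (used to promote a positive-probability lower bound to probability one via independent subtrees) then yields $p(\b) = 1 + (\s^2-\t^2)/2$.

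\emph{Phase $B_3$} is the delicate case. The same second-moment integral is now dominated by pairs of high overlap $s\approx t$ and gives $\E|\wt\XX_{\b,\rho}(t)|^2 \asymp \eee^{(1+2\s^2)t}$, which via Markov yields the upper bound $p_t(\b)\le\tfrac12+\s^2+o_\P(1)$. For the matching lower bound one writes $|\wt\XX_{\b,\rho}(t)|^2 = \sum_k \eee^{2\s x_k(t)} + \sum_{k\ne l}\eee^{\s(x_k+x_l)+i\t(y_k-y_l)}$ and notes that the diagonal term is a real BBM partition function at inverse temperature $2\s$, which lies in its own high-temperature regime precisely because $2\s^2<1$, hence concentrates around $\eee^{(1+2\s^2)t}$. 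The main technical obstacle is then to control the off-diagonal sum and show that the destructive cancellations among the complex phases $\eee^{i\t(y_k-y_l)}$ do not swamp the diagonal contribution; this will likely require a fourth-moment estimate or a dyadic chaining argument over truncation heights in the spirit of the complex GREM analysis in~\cite{KaKli14_G}. Continuity of $p(\b)$ across the phase boundaries, and in particular at the tricritical points $(\s,\t)=(\pm 1/\sqrt 2,\pm 1/\sqrt 2)$ where all three phases meet, will require separate arguments probably involving logarithmic corrections.
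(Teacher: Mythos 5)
The statement you are trying to prove is labelled a \emph{conjecture} in the paper, and the paper does \emph{not} prove it. What the paper does establish is only the $B_2$ part of the claim, and only in probability: the remark following Conjecture~\ref{conj:phase-diagram} observes that convergence in probability for $\beta \in B_2$ follows from Theorems~\ref{thm.real} and~\ref{thm.real2} together with \cite[Lemma~3.9~(1)]{KaKli14}. Your $B_2$ paragraph is essentially the same argument (convergence in law of the rescaled modulus to an a.s.\ nonzero limit kills the fluctuations after dividing by $t$, then use $m(t)/t\to\sqrt2$), so that part is fine. Everything else --- $B_1$, $B_3$, the closures, and the $L^1$ upgrade --- is left open in the paper, and your sketch does not close those gaps.

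Two of your open steps contain concrete errors or unproven claims. First, in phase $B_1$ you assert ``Inside $B_1$ one has $\sigma^2+\tau^2<1$,'' which is false: $B_1 = \C\setminus\overline{B_2\cup B_3}$ also contains the wedge $\{2\sigma^2>1,\ |\sigma|+|\tau|<\sqrt2\}$, in which $\sigma^2+\tau^2$ can exceed $1$ (e.g.\ $\sigma=1.1$, $\tau=0.1$). In that region the martingale $M_\beta(t)$ is \emph{not} $L^2$-bounded --- this is exactly why the appendix's Proposition~\ref{Prop.martingale} is stated only for $|\beta|<1$, not for all of $B_1$ --- so your ``classical second-moment method'' argument collapses there; one would need $L^p$ bounds with $p<2$ or a truncation, and this is precisely the missing piece the paper points at when it writes that $B_1$ is only ``supported by results for similar models.'' Second, for $L^1$ convergence you invoke ``tightness supplied by Theorems~\ref{thm.real}--\ref{thm.real2}'' as giving uniform integrability of $p_t(\beta)$; tightness does not imply uniform integrability, and in fact the troublesome tail here is the lower one ($\log|\wt\XX_{\b,\rho}(t)|$ very negative due to near-cancellations), which the theorems do not control. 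You also, correctly, flag that the $B_3$ off-diagonal cancellation argument and the boundary/tricritical-point continuity are unresolved. So: the proposal correctly reproduces the paper's $B_2$-in-probability result, but it is not a proof of the conjecture, and it contains a genuine error in the $B_1$ step and an unsubstantiated $L^1$ claim on top of the acknowledged $B_3$ gap.
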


\begin{remark}
Convergence in probability for $\beta \in B_2$ in
\eqref{eq:limiting-log-partition-function} follows from
Theorems~\ref{thm.real} and \ref{thm.real2} by \cite[Lemma~3.9~(1)]{KaKli14}. 
The remaining Parts~$B_1$ and $B_3$ of Conjecture~\ref{conj:phase-diagram} are supported by
results for similar models, e.g.,~\cite{DerridaEvansSpeer1993,
BarralJinMandelbrot2010, LRV14, KaKli14, KaKli14_G} and by the following intuition.

For $\beta \in B_1$, $\widetilde{\XX}_{\b,\rho}(t)/\E[\widetilde{\XX}_{\b,\rho}(t)]$ is an
$L^1$-convergent complex-valued martingale (as $t \to \infty$) with expectation
$1$ and a simple computation shows that
\begin{align}
\label{eq:expected-partition-function}
\vert \E[\widetilde{\XX}_{\b,\rho}(t)] \vert = \exp\left(t + \frac{1}{2} t(\sigma^2 - \tau^2)\right)
.
\end{align}
See Appendix~\ref{sec:martingale-convergence} for the $L^2$-martingale
convergence in the domain $|\beta| < 1$. 

For $\beta \in B_3$, the variance of the partition function of the REM
with $\eee^t$ independent particles equals
\begin{align}
 \eee^t \left(\E [\exp( 2 \s x_1(t))] - \exp\left(\frac{1}{2}t (\sigma^2 - \tau^2) \right)\right) 
\underset{t \uparrow \infty}{\sim}
\exp\left(  t + 2\sigma^2 t \right)
,
\end{align}
cf.~\cite{KaKli14}. Therefore, as $t \uparrow \infty$, the standard deviation
has a greater order of magnitude than the expectation 
\eqref{eq:expected-partition-function}.
So, in view of the central limit theorem, it is plausible that
\begin{align}
\label{eq:fluctuation-normalization}
\widetilde{\XX}_{\b,\rho}(t)/ \exp\left( \frac{1}{2} t + \sigma^2 t \right)
\end{align}
converges as $t \uparrow \infty$ in distribution. However, due to correlations
between the particle positions of BBM, the limiting distribution in
\eqref{eq:fluctuation-normalization} need not be Gaussian,
cf.~\cite[Theorems~4.2 and 6.6]{LRV14} and \cite[Eq.~(2.11)]{KaKli14}.
\end{remark}

\paragraph{Organization of the rest of the paper.} The proofs of
Theorems~\thv(thm.real) and \thv(thm.real2) consist of two main steps. First, we
show that only the extremal particles can contribute to the partition function
in the limit as $t \uparrow \infty$ (cf., Proposition~\ref{Prop.nocont} and its
proof in Section~\ref{sec:proof-of-proposition}). Second, we use the continuous
mapping theorem to deduce Theorems~\thv(thm.real) and \thv(thm.real2) from the
behaviour of the extremal process. This is done in
Section~\ref{sec:partition-function}.

\section{Convergence of the partition function} \label{sec:partition-function}

First, we state that in the glassy phase $B_2$ only the extremal particles can
contribute to the limit of the partition function as $t$ tends to infinity.

\begin{proposition}\TH(Prop.nocont)
If $|\rho| \in (0,1]$ and $\beta \in B_2$, then, for all $\d,\e>0$, there exists
$A_0 > 0$ such that, for all $A>A_0$ and all $t$ sufficiently large,
\begin{align}\Eq(rho.1)
\P\Biggl\{\Bigl\vert \sum_{k=1}^{n(t)}\eee^{\s(x_k(t)-m(t))+i\t y_k(t)}\1_{\{x_k(t)-m(t)<-A\}}\Bigr\vert>\d\Biggr\}<\e.
\end{align}
\end{proposition}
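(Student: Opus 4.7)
The plan is to estimate $\E[|S_{A,t}|^2]$, where $S_{A,t}$ denotes the random variable appearing in \eqref{rho.1}, and then apply Chebyshev's inequality to reduce the statement to a bound of the form $\E[|S_{A,t}|^2] < \e \d^2$ uniformly in large $t$, for $A$ sufficiently large. Expanding $|S_{A,t}|^2$ and using the representation $y_k = \r x_k + \sqrt{1-\r^2}\,z_k$ from \eqref{cor.1} with $Z$ an independent BBM on the same genealogical tree, I would first take the conditional expectation with respect to $Z$. Since $z_k(t) - z_l(t)$ is centered Gaussian with variance $2(t - d_{kl})$ conditional on the tree and $X$, this yields
\[
\E\bigl[|S_{A,t}|^2\bigr] \le \E\biggl[\sum_{k, l \le n(t)} \eee^{\s(x_k(t) + x_l(t) - 2m(t))}\, \eee^{-\t^2(1-\r^2)(t - d_{kl})}\, \1_{\{x_k(t), x_l(t) < m(t) - A\}}\biggr].
\]
The factor $\eee^{-\t^2(1-\r^2)(t - d_{kl})}$ supplies exponential decay in $t - d_{kl}$ whenever $|\r| < 1$.

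\textbf{Upper envelope truncation.} This expectation is still not uniformly controlled in $t$, since it is dominated by atypical ancestral paths making large upward excursions. To exclude those, I use the upper envelope from \cite{ABK_G}: for appropriately chosen $r, K > 0$ depending on $\e$, the event
\[ U_{r, t, K} := \bigl\{ x_k(s, t) \le \tfrac{s}{t} m(t) + K\ \text{for all } k \le n(t),\ s \in [r, t - r] \bigr\} \]
satisfies $\P(U^c_{r,t,K}) < \e/2$ uniformly in $t$ large. Restricting $S_{A,t}$ to particles whose ancestral paths lie under this envelope gives a sum $S^{\mathrm{env}}_{A,t}$ with $S_{A,t} = S^{\mathrm{env}}_{A,t}$ on $U_{r,t,K}$, so it suffices to show $\E\bigl[|S^{\mathrm{env}}_{A,t}|^2\bigr] \le \e \d^2 / 2$ for $A$ and $t$ large.

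\textbf{Many-to-two and Gaussian estimates.} Conditioning on the common-ancestor time $d_{kl} = s$ and the ancestor's position $z$, the two descendant sub-trees evolve independently on $[s, t]$. Each contributes an expected $\eee^{t - s}$ descendants, while the ballot-type probability under the envelope constraint (Bramson-type, as revisited in \cite{ABBS, ABK_G}) furnishes a factor of order $(t - s)^{-3/2}$ from each Brownian path staying below the envelope. These factors combine with the $\eee^t \eee^{-m(t)^2/(2t)} \sim t^{3/2}$ prefactor coming from the Gaussian density at $m(t)$ to produce an integrand in the variable $y = x_k(t) - m(t) < -A$ of shape $\eee^{(\s - \sqrt{2})y} |y|^{\a}$, reminiscent of the extremal intensity in \eqref{extremal.2}. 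Assuming $|\r| < 1$, the combined decay from $\eee^{-\t^2(1-\r^2)(t - s)}$ together with the condition $\b \in B_2$ (in particular $2\s^2 > 1$) then yields a bound $\eta(A)$ with $\eta(A) \to 0$ as $A \to \infty$.

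\textbf{Main obstacle.} The hardest case is $|\r| = 1$, where the Gaussian cancellation from $Z$ collapses to $1$. Here the needed decay has to be extracted instead from Gaussian integration of the complex phase $\eee^{i\t\r(x_k - x_l)}$ itself: since $x_k(t) - x_l(t)$ has variance $2(t - s)$ conditional on $d_{kl} = s$, this contributes an analogous factor $\eee^{-\t^2(t - s)}$, which then has to be matched carefully with the envelope-restricted ballot estimates throughout all of $B_2$. This matching, spread across the joint integral in $s$ and $y$, is the core technical computation.
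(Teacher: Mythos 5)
Your outline has the right skeleton for part of the argument, and the intermediate step of conditioning on $Z$ to extract $\eee^{-\t^2(1-\r^2)(t-d_{kl})}$ is exactly what the paper does. But the proposal has two substantive gaps.

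First, the displayed bound is too lossy for $\r\ne0$. By replacing $\eee^{i\r\t(x_k-x_l)}$ by its modulus $1$, the residual exponent in $t-q$ (with $q=d_{kl}$) that emerges from the many-to-two and Gaussian computation becomes $(\s-\sqrt2)^2-(1-\r^2)\t^2$, which is not negative unless $\sqrt{1-\r^2}\,|\t|>|\s-\sqrt2|$ --- a strictly stronger requirement than membership in $B_2$, and vacuous as $|\r|\to1$. Retaining the $x$-phase, i.e.\ shifting $y\mapsto w+\l(t-q)$ with the complex $\l=\s+i\r\t$ rather than the real $\s$, is what upgrades the exponent to $(\s-\sqrt2)^2-\t^2$, negative on $\{\sqrt2/2<\s\le\sqrt2,\ \s+\t>\sqrt2\}$ independently of $\r$. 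You describe this as a remaining obstacle only for $|\r|=1$, but it is in fact needed for every $\r\ne0$, and the complex shift together with the resulting Gaussian tail estimate with complex argument is precisely the new technical content relative to \cite{MRV13}. Deferring it as ``the core technical computation'' leaves the proposal without a proof of the crucial estimate $\eta(A)\to0$.

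Second, a second-moment bound alone cannot cover all of $B_2$: if $\s>\sqrt2$ and $\t<\s-\sqrt2$ (for instance $\b$ real with $\s>\sqrt2$), even with full phase cancellation the exponent $(\s-\sqrt2)^2-\t^2$ is positive and the $q$-integral diverges, so $\E[|S_{A,t}^{\mathrm{env}}|^2]$ blows up as $t\uparrow\infty$. The paper splits into two regimes: for $\s>\sqrt2$ a first-moment (Markov) bound suffices, since the truncated first moment decays like $\eee^{(\sqrt2-\s)A}$, while the Chebyshev/second-moment route is reserved for $\sqrt2/2<\s\le\sqrt2$, $\s+\t>\sqrt2$. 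Your plan needs this dichotomy.

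Finally, a more minor point: \cite[Theorem~2.2]{ABK_G} is stated with the envelope $\frac st m(t)+(s\wedge(t-s))^\g$ for $\g<1/2$, not a constant-height corridor $\frac st m(t)+K$; the polynomial bulge is the form of the estimate available, and it also feeds into the final summation over the depth parameter $B$ through the ballot probability of order $(B^{1/\g}\vee r)/t$.
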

The proof of Proposition~\thv(Prop.nocont) is postponed until
Section~\ref{sec:proof-of-proposition}. Using Proposition \thv(Prop.nocont)
together with the continuous mapping theorem, we now prove
Theorem~\thv(thm.real).
\begin{proof}[Proof of Theorem~\thv(thm.real)]

Denote by $\mathbb{M}$ the space of locally finite counting measures on
$\overline{\R} := \R \cup \{ +\infty\}$. We endow $\mathbb{M}$ with the 
vague topology. Consider for $A\in \R_+$ the functional
$\Phi_{\b,A}\colon \mathbb{M} \to \R$. This functional maps a locally
finite counting measure $\zeta =\sum_{i\in I}\d_{x_i}$ to
$\Phi_{\b,A}(\zeta):=\sum_{i\in I}\eee^{\b x_i}\1_{\{x_i>-A\}}$, where $I$ is a
countable index set. The set of locally finite measures $\zeta$ on which the
functional $\Phi_{\b,A}$ is not continuous (i.e., $\zeta$ charging $-A$ or
$+\infty$) has zero measure w.r.t.~the law of $\mathcal{E}$. Hence, by the
continuous mapping theorem, it follows that $\Phi_{\b,A}(\EE_t)$ converges in
law to $\Phi_{\b,A}(\EE)$, which is equal to
\begin{align}\Eq(real.3) 
\sum_{k,l\geq 1} \eee^{\b \left(\eta_k+\D_l^{(k)}\right)}\1_{\{\eta_k+\D_l^{(k)}\geq -A\}}.
\end{align}
Note that by Proposition \thv(Prop.nocont), for all $\e>0$ and $\d>0$, there
exists $A_0$ such that, for all $A>A_0$ and all $t$ sufficiently large,
\begin{align}
\P\left\{\left\vert\XX_{\b,1}(t)-\Phi_{\b,A}(\EE_t)\right\vert>\d \right\}<\e.
\end{align} 
Hence, by Slutsky's Theorem (see, e.g., \cite[Theorem~13.18]{Klenke2008}), $\XX_{\b,1}(t)$ converges in law to 
\begin{align}\Eq(real.4)
\lim_{A\uparrow \infty}\sum_{k,l\geq 1} e^{\b \left(\eta_k+\D_l^{(k)}\right)}\1_{\{\eta_k+\D_l^{(k)}\geq -A\}}
\end{align}
which is equal to $\XX_{\b,1}$.
\end{proof}
We now prove  Theorem~\thv(thm.real2).
\begin{proof}[Proof of Theorem~\thv(thm.real2)]
Using Representation \eqv(cor.1), we have that $\XX_{\b,\rho}(t)$ is in
distribution equal to
\begin{align}\Eq(real.10)
\sum_{k=1}^{n(t)}\eee^{(\s+i\rho \t)(x_k-m(t))+i \sqrt{1-\rho^2}\t z_k(t)-i\rho\t m(t)},
\end{align}
where $(z_k(t),k\leq n(t))$ are the particles from a BBM
that is independent from $X(t)$ (but with respect to the same GW
tree). If $|\rho|\neq 1$, then by \cite[see Lemma~3.2 and the
subsequent discussion before Eq.~(3.7) therein]{MRV13} we get that
\begin{align}\Eq(real.11)
G(t) := \sum_{k=1}^{n(t)}\d_{(x_k(t)-m(t),\exp (i\sqrt{1-\rho^2}\t z_k(t)-i\rho\t m(t)))} 
\end{align}
converges weakly as $t \uparrow \infty$ to
\begin{align}\Eq(real.12)
\GG := \sum_{k,l\geq 1} \d_{(p_k+\D_l^{(k)},U^{(k)}\wt W_l^{(k)})}, 
\end{align}
where $\left(U^{(k)}\right)_{k\geq 1}$ are i.i.d.~uniformly distributed on the
unit circle and $\wt W_l^{(k)}$ are the atoms of a point process on the unit
circle. The description of $\wt W^{(k)}$ could be made more explicit using the
description of the cluster process $\Delta$ obtained in \cite[Theorem~2.3]{ABBS}
that encodes the genealogical structure of $\D$.

Denote by $\wt{\mathbb{M}}$ the space of locally finite counting measures on
$\overline{\R} \times \{z \in \C \colon |z| = 1 \}$. We endow $\wt{\mathbb{M}}$
with the (Polish) topology of vague convergence. For $A\in \R_+$, consider the
functional $\wt\Phi _{\b,A} \colon \wt{\mathbb{M}} \to \C $ that maps a locally
finite counting measure $\tilde \zeta =\sum_{k\in I}\d_{(x_k,z_k)}$ to $\wt\Phi
_{\b,A}(\zeta):=\sum_{k\in I}\eee^{\b x_k}z_k\1_{\{x_k>-A\}}$, where $I$ is a
countable index set. The set of locally finite measures $\zeta$ on which the
functional $\Phi_{\b,A}$ is not continuous (i.e., $\wt\zeta$ charging
$(-A,\cdot)$ or $(+\infty, \cdot)$) has zero measure w.r.t.~the law of
$\mathcal{G}$. Hence, by the continuous mapping theorem, it follows that
$\wt\Phi_{\s+i\rho \t,A}(\GG_t)$ converges in law to $\wt\Phi_{\s+i\rho
\t,A}(\GG)$, which is equal to
\begin{align}\Eq(real.13)
\sum_{k,l\geq 1} \eee^{(\s+i \rho \t)\left(\eta_k+\D_l^{(k)}\right)}U^{(k)}\wt W_l^{(k)}\1_{\{\eta_i+\D_l^{(k)}\geq -A\}}
.
\end{align}  
Since $\eee^{(i \rho \t)\left(\eta_k+\D_l^{(k)}\right)} U^{(k)}$ is also uniformly distributed on the unit circle, 
 \eqv(real.13) 
is  equal  in distribution to
\begin{align}\Eq(real.14)
\sum_{k,l\geq 1} \eee^{\s\left(\eta_k+\D_l^{(k)}\right)}U^{(k)} \wt W_l^{(k)}\1_{\{\eta_i+\D_l^{(k)}\geq -A\}}
.
\end{align}
Note that again by Proposition~\thv(Prop.nocont), for all $\e>0$ and $\d>0$,
there exists $A_0$ such that, for all $A>A_0$ and all $t$ sufficiently large,
\begin{align}
\P\left\{\left\vert\XX_{\b,\rho}(t)-\wt\Phi_{\s+i\rho\t,A}(\GG_t)\right\vert>\d\right\}<\e.
\end{align} 
Hence, by Slutsky's theorem (see, e.g., \cite[Theorem~13.18]{Klenke2008}),
$\XX_{\b,\rho}(t)$ converges in law to
\begin{align}\Eq(real.4a)
\lim_{A\uparrow \infty}\sum_{k,l\geq 1} \eee^{\s\left(\eta_k+\D_l^{(k)}\right)}U^{(k)}\wt W_l^{(k)}\1_{\{\eta_k+\D_l^{(k)}\geq -A\}}
= 
\sum_{k,l\geq 1} \eee^{\s\left(\eta_k+\D_l^{(k)}\right)}U^{(k)}\wt W_l^{(k)}
.
\end{align}
We rewrite \eqv(real.4a) as
\begin{align}
\label{eq:partition-funct}
\sum_{k \geq 1} \eee^{\s\eta_k}U^{(k)} W^{(k)}
,
\end{align}
where $W^{(k)} := \sum_{l} \eee^{\s \D_l^{(k)}} \wt W^{(k)}_l$, $k \geq 1$ are
i.i.d.\ r.v.'s. From \eqref{eq:partition-funct}, it follows that conditionally
on $Z$, the distribution of $ \XX_{\b,\rho}$ is complex isotropic $\sqrt
2/\s$-stable.

\end{proof}

\section{Proof of Proposition~\thv(Prop.nocont)}
\label{sec:proof-of-proposition} 

Due to symmetry, we only prove Proposition~\thv(Prop.nocont) for $\s,\t>0$. In
the proof of Proposition~\thv(Prop.nocont), we distinguish two cases:
\begin{eqnarray}
\text{\textbf{(a)}} \quad \sigma>\sqrt 2; & & \text{\textbf{(b)}} \quad \sqrt{2}/2<\sigma\leq\sqrt 2\text{ and } \s+\t>\sqrt 2.
\end{eqnarray}
\paragraph{Case (a).} In this case, the proof works as in the independent case treated in
\cite[Lemma\;3.5]{MRV13}. For completeness, we also provide the proof in this case.
We use a first moment computation together with the upper bound on the maximal
position of all particles obtained in \cite[Theorem~2.2]{ABK_G}.
\begin{proof}[Proof of \textup{Proposition~\thv(Prop.nocont)} in case \textup{(a)}]
Recall the notation from \eqref{eq:bbm-process}.
By \cite[Theorem~2.2]{ABK_G}, for $0<\g<\frac{1}{2}$, there exists  $r_{\e} > 0$ such
that for all $r>r_{\e}$ and $t>3r$
\begin{align}\Eq(rho.2)
\P\left\{\exists k\leq n(t) \colon x_k(s,t) > U_{t,\g} \mbox{ for some }s\in[r,t-r]\right\}<\frac{\e}{2},
\end{align}
where $U_{t,\g}(s):=\frac{s}{t}m(t)+(s\wedge (t-s))^\g$. Define the following set
on the path space 
\begin{align}\Eq(UU.1)
\UU_{t,r,\g}:=\{x(\cdot) \in C(\R_+, \R)\colon  x(s,t)\leq\frac{s}{t}m(t)+(s\wedge (t-s))^\g, \forall s\in[r,t-r]\}.
\end{align}
By \eqv(rho.2), to show \eqv(rho.1), it suffices to check that, for sufficiently large $A > 0$,
\begin{align}\Eq(rho.3)
\P\Biggl\{\Bigl\vert \sum_{k=1}^{n(t)}\eee^{\s(x_k(t)-m(t))+i\t y_k(t)}\1_{\{x_k(t)-m(t)<-A\}\cap\{ x_k\in\UU_{t,r,\g} \}  }\Bigr\vert>\d\Biggr\}<\e/2.
\end{align}
By Markov's inequality, the probability in \eqv(rho.3) is bounded from above by
\begin{align}\Eq(rho.4)
&\frac{1}{\d}\E\left[\left\vert \sum_{k=1}^{n(t)}\eee^{\s(x_k(t)-m(t))+i\t y_k(t)}\1_{\{x_k(t)-m(t)<-A\}\cap \{x_k\in\UU_{t,r,\g}\} }\right\vert\right]\nonumber\\
&\leq \frac{1}{\d}\E\left[ \sum_{k=1}^{n(t)}\eee^{\s(x_k(t)-m(t))}\1_{\{x_k(t)-m(t)<-A\}\cap \{x_k\in\UU_{t,r,\g}\} }\right]
.
\end{align}
We rewrite the expectation in the r.h.s.~of \eqv(rho.4) as $\sum_{B>A} S(B,t)$, where
\begin{align}\Eq(rho.5)
S(B,t) := \E\left[ \sum_{k=1}^{n(t)}\eee^{\s(x_k(t)-m(t))}\1_{\{x_k(t)-m(t)\in (-B+1,-B]\}\cap \{x_k\in\UU_{t,r,\g} \}}\right].
\end{align}
Next, we manipulate the event 
\begin{align}\Eq(rho.6)
&\{x_k(t)-m(t)\in (-B+1,-B]\}\cap \{x_k\in\UU_{t,r,\g}\}\\
&\subset \{x_k(t)-m(t)\in (-B+1,-B]\}\cap\{\xi(s)\leq  \frac{s}{t}B+(s\wedge(t-s))^\g, \forall s\in[r,t-r]\},\nonumber
\end{align}
where $\xi_k(s):=x_k(s,t)-\frac{s}{t}x_k(t)$ is a Brownian bridge from $0$ to $0$
in time $t$ that is independent from $x_k(t)$. Hence, we can bound $S(B,t)$ from
above by
\begin{align}\Eq(rho.7)
&\E\left[ \sum_{k=1}^{n(t)}\eee^{\s(x_k(t)-m(t))}\1_{\{x_k(t)-m(t)\in (-B+1,-B]\}\cap\{\xi_k(s)\leq  \frac{s}{t}B+(s\wedge(t-s))^\g, \forall s\in[r,t-r]\} }\right]\\\nonumber
&=\eee^t\E\left[\eee^{\s(x(t)-m(t))}\1_{x(t)-m(t)\in (-B+1,-B]}\right]\P\left\{\xi(s)\leq  \frac{s}{t}B+(s\wedge(t-s))^\g, \forall s\in[r,t-r]\right\},
\end{align}
where $x(t)$ is normal distributed with mean $0$ and variance $t$ and
$\xi(\cdot)$ is a Brownian bridge from $0$ to $0$ in time $t$ independent from
$x(t)$. The expectation in the second line of \eqv(rho.7) is equal to
\begin{align}\Eq(rho.8)
\int_{m(t)-B}^{m(t)-B+1}\eee^{\s(x-m(t))}\eee^{-x^2/2t} \frac{\ddd x}{\sqrt{2\pi t}} 
=\eee^{-\s m(t)+\frac{\s^2 t}{2}}\int_{m(t)-B-\s t}^{m(t)-B+1-\s t} \eee^{-w^2/2t}\frac{ \ddd w}{\sqrt{2\pi t}},
\end{align}
where we changed variables $x=w+\s t$ . Since $\s>\sqrt 2$, by the definition of
$m(t)$ it holds that $m(t)-B-\s t<(\sqrt{2}-\s)t<0$, for all $t > 1$. Therefore,
using the standard Gaussian tail bound,
\begin{align}\Eq(Gauss.tail)
\int_{-\infty}^{-x}\eee^{-w^2/2}\frac{\ddd w}{\sqrt{2\pi}}\leq \frac{1}{\sqrt{2\pi}x}\eee^{-x^2/2}, \quad x>0,
\end{align}
we can bound \eqv(rho.8) using $m^2(t) = 2 t - 3 t \log t + \left( 3 \log t / (2
\sqrt{2})\right)^2$ from above by
\begin{align}\Eq(rho.9)
& \sfrac{\sqrt{t}}{\sqrt{2\pi}(B-1+\s t-m(t))}\eee^{-\s m(t)+\frac{\s^2 t}{2}}\eee^{-\left(m(t)-B+1-\s t\right)^2/2t}
\underset{t \uparrow \infty}{\sim} \sfrac{t}{\sqrt{2\pi} (\s-\sqrt 2)}\eee^{-t+(\sqrt 2-\s)(B-1)}
.
\end{align}
Next, we analyse the probability in the r.h.s.~of \eqv(rho.7). We bound it, for
$B<t^\g/3$, from above by
\begin{align}\Eq(rho.10)
\P\left\{\xi(s)\leq  2(s\wedge(t-s))^\g, \forall s\in[r\lor B^{1/\g}
,(t-B^{1/\g})\wedge (t-r)]\right\}.
\end{align}
By the proof of \cite[Theorem~2.3, see (5.55)]{ABK_G}, for all $r$ large enough,
probability \eqv(rho.10) is bounded from above by
\begin{align}\Eq(rho.11)
\P\left\{\xi(s)\leq  0, \forall s\in[r\lor B^{1/\g},(t-B^{1/\g})\wedge (t-r)]\right\}(1+\e)
\leq  \frac{2 (B^{1/\g}\wedge r)}{t-2(B^{1/\g}\wedge r)} (1+\e),
\end{align}
where in the last step we used \cite[Lemma~3.4]{ABK_G}. Plugging the estimates
from \eqv(rho.9) and \eqv(rho.11) into \eqv(rho.7), we get
\begin{align}\Eq(rho.12)
S(B,t)\leq \left( \frac{2(B^{1/\g}\lor r)}{t-2(B^{1/\g}\lor r)} (1+\e)\1_{\{B>t^\g/3\}}+\1_{\{B\leq t^\g/3\}}\right)\frac{t\eee^{(\sqrt 2-\s)(B-1)}}{\sqrt{2\pi} (\s-\sqrt 2)}(1+o(1))
.
\end{align}
Note that in \eqref{rho.12} and below $o(1)$ denotes a $t$-dependent non-random
quantity with
\begin{align}
o(1) \underset{t \uparrow \infty}{\longrightarrow} 0.
\end{align}
From \eqref{rho.12} follows that $\lim_{t\uparrow \infty} \sum_{B>t/3}S(B,t)=0$ and
\begin{align}\Eq(rho.13)
\sum_{B=A+1}^{t^\g/3}S(B,t)\leq
\sum_{B=A+1}^{t^\g/3} \frac{2t(B^{1/\g}\lor r)\eee^{(\sqrt 2-\s)(B-1)}}{\sqrt{2\pi} (\s-\sqrt 2)(t-2(B^{1/\g}\lor r))} 
(1+\e)
,
\end{align}
which can be made smaller than $\e/2$ by taking $A$ large enough since
$\sqrt{B^{1/\g}\wedge r}\eee^{(\sqrt 2-\s)(B-1)}$ is summable in $B$ (because
$\sqrt 2-\s<0$). This concludes the proof of Theorem~\thv(Prop.nocont) in
case (a).
\end{proof}

\paragraph{Case (b).} In this case, the analysis is somewhat more intricate and we
have to employ the imaginary part of the energy. 

\paragraph{Short outline of the proof.} To prove \eqv(rho.1), we first
apply the Chebyshev inequality to the absolute value of the truncated partition
function. Then, we compute the second moment which arises in the Chebyshev
inequality. Along the way, we first use Representation~\eqv(cor.1) and compute
the expectation w.r.t.\ $z(t)$ conditionally on $\mathcal{F}^{\mathbb{T}_t}$,
see \eqv(rho.15). Starting from \eqv(rho.16), we use the so-called upper
envelope for the given path of $x(t)$  (see \cite[Theorem~2.2]{ABK_G}) to
control the expectation w.r.t.\ $x(t)$. Technically, we have to distinguish
between three regimes for the time of the most recent common ancestor
$q_{k,l}=d(x_k(t),x_l(t))$. The corresponding terms are controlled separately
starting from Eq.~\eqv(rho.51).\footnote{Note that this approach to control the second
moment differs from the one used in \cite{MRV13}. The latter one relies on
decomposing the paths of the BBM particles according to the time and location of
the minimal position along the given path.}

\begin{proof}[Proof of Proposition~\thv(Prop.nocont) in case \textup{(b)}]
We proceed as in case (a) until \eqv(rho.3). This time, using Chebyshev's
inequality, we bound the probability in \eqv(rho.3) by
\begin{align}\Eq(rho.14)
\frac{1}{\d^2}\E\Biggl[\Bigl\vert \sum_{k=1}^{n(t)}\eee^{\s(x_k(t)-m(t))+i\t y_k(t)}\1_{\{x_k(t)-m(t)<-A\}\cap\{ x_k\in\UU_{t,r,\g} \}  }\Bigr\vert^2\Biggr],
\end{align}
We introduce the shorthand notation
$ \widetilde{x}_k(t) := x_k(t)-m(t)$, $k\leq n(t)$.
Using this notation, together with Representation \eqv(cor.1), we get that
\eqv(rho.14) is equal to
\begin{align}\Eq(rho.100) 
\frac{1}{\d^2}\E\Biggl[\Bigl\vert \sum_{k=1}^{n(t)}\eee^{(\s+i\rho \t ) x_k(t)-\s m(t)+i\sqrt{1-\rho^2}\t z_k(t)}\1_{\{\widetilde{x}_k(t) <-A\}\cap\{ x_k\in\UU_{t,r,\g} \}  }\Bigr\vert^2\Biggr]
.
\end{align}
Define $\lambda := \s+i\rho \t$. Observe that $|z|^2=z\bar z$, for $z\in \C$.
Hence, the expectation in \eqv(rho.100) is equal to
\begin{align}
\Eq(rho.15)
& \E\Biggl[\sum_{k,l=1}^{n(t)} \eee^{\overline\lambda  x_l(t)+\lambda x_k(t)-2\s m(t)+i\sqrt{1-\rho^2}\t (z_l(t)-z_k(t))}\1_{\forall_{j \in \{l,k\}}\left(\{\widetilde{x}_j(t)<-A\}\cap\{ x_j\in\UU_{t,r,\g} \} \right)}\Biggr]
\\
&
= \E\Bigg[\sum_{k,l=1}^{n(t)} \Big(\eee^{\overline{\lambda} x_l(t)+\lambda x_k(t)-2\s m(t)}\1_{\forall_{j \in \{l,k\}}\left(\{\widetilde{x}_j(t)<-A\}\cap\{ x_j\in\UU_{t,r,\g} \}\right) }\\
&\quad\quad\qquad\times\E\left[\eee^{i\sqrt{1-\rho^2}\t (z_l(t)-z_k(t))} \mid \mathcal{F}^{\mathbb{T}_t} \right] \Big)\Bigg],
\nonumber
\end{align}
where we used that $(z_k(t), k \leq n(t))$ is, conditionally on $\mathbb{T}_t$, independent from $(x_k(t), k \leq n(t))$. Since $(z_k(t), k \leq n(t))$ is
a BBM on the same GW tree as $x$,  \eqv(rho.15) is equal
to
\begin{align}\Eq(rho.101)
\E\left[\sum_{k,l=1}^{n(t)}\eee^{\overline{\lambda} x_l(t)+\lambda x_k(t)-2\s m(t)+(1-\rho^2)\t^2\left(t-d(x_l(t),x_k(t))\right)}\1_{\forall_{j \in \{l,k\}}\{\widetilde{x}_j(t)<-A\}\cap\{ x_j\in\UU_{t,r,\g} \} }\right]
.
\end{align}
We introduce the time of the most recent common ancestor $q_{k,l}=d(x_k(t),x_l(t))$, where $d(\cdot,\cdot)$ is defined in \eqref{eq:intro:gw-overlap}, 
and rewrite \eqv(rho.101) as $\sum_{B>1}T(B,t)$, where
\begin{align}\Eq(rho.16)
T(B,t) :=
\E\left[\sum_{k,l=1}^{n(t)}\eee^{\overline{\lambda}x_l(t)+\lambda x_k(t)-2\s m(t)}\eee^{(1-\rho^2)\t^2\left(t-q_{k,l}\right)}\1_{\UU_{B,q,t}^{l,k}}\right],
\end{align}
and
\begin{align}\Eq(rho.20)
\begin{aligned}
\UU_{B,q,t}^{l,k}& := \cap_{j \in \{l,k\}}  \{\widetilde{x}_j(t)<-A\}\cap \{x_j(s)\leq U_{t,\g}(s), \forall s\in[r,t-r]\}\\
&\quad \cap\{x_j(q_{k,l})-U_{t,\g}(q_{k,l})\in [-B+1,-B]\}.
\end{aligned}
\end{align}
Similar to \eqv(rho.6), we now relax conditions on the path of the particle.
If $q_{k,l}>\frac{3}{4}t$, then we get
\begin{align}\Eq(rho.17)
\UU_{B,q,t}^{l,k}
\subset & \cap_{j \in \{l,k\}}  \{\widetilde{x}_j(t)<-A\}\cap\{x_l(q_{k,l},t)-U_{t,\g}(q_{k,l})\in [-B+1,-B]\}\\
& \cap\{\xi^q_l(s)\leq 8 (s\wedge(q_{k,l}-s))^\g,  \forall s\in[B^{1/\g}\lor r,q_{k,l}-(B^{1/\g}\wedge r)]\} =: \TT^{l,k}_{B,q,t},
\nonumber
\end{align}
where $\xi^q_l(s) := x_l(s,t)-\frac{s}{q}x_l(q_{k,l},t)$ is a Brownian bridge from $0$ to $0$
in time $q_{k,l}$, which is, in particular, independent of $x_l(q_{k,l},t)$. Moreover, for
$q\leq \frac{3}{4}t$, we have
\begin{align}\Eq(rho.50)
\UU_{B,q,t}^{l,k}\subset \cap_{j \in \{l,k\}}  \{\widetilde{x}_j(t)<-A\}\cap\{x_l(q_{k,l},t)-U_{t,\g}(q_{k,l})\in [-B+1,-B]\} =: \SS_{B,q,t}^{l,k}.
\end{align}
Hence, $T(B,t)$ defined in \eqv(rho.16) is bounded from above by
\begin{align}
\Eq(rho.18)
\nonumber
& \E\left[\sum_{k,l=1}^{n(t)}\eee^{\overline{\lambda}x_l(t)+\lambda x_k(t)-2\s m(t)}\eee^{(1-\rho^2)\t^2\left(t-q_{k,l}  \right)}\Big(
\1_{\{ q_{k,l} >\frac{3}{4}t\} \cap \TT^{l,k}_{B,q,t}}
 + \1_{\{ q_{k,l} \leq \frac{3}{4}t \} \cap  \SS_{B,q,t}^{l,k}}\Big)\right] 
\\
\nonumber
& =K\int_0^t \ddd q \ \eee^{2t-q+(1-\rho^2)\t^2\left(t-q\right)} \int_{U_{t,\g}(q)-B}^{U_{t,\g}(q)-B+1} \ddd x
\int_{-\infty}^{m(t)-A-x} \ddd y \int_{-\infty}^{m(t)-A-x} \ddd y' 
\\
&\quad\times
\eee^{\s(2x+y+y'-2m(t))+i\rho\tau(y'-y)}\eee^{-\frac{y^2+y'^2}{2(t-q)}}\sfrac{1}{2\pi(t-q)} \eee^{-\frac{x^2}{2q}}\sfrac{1}{\sqrt{2\pi q}}
\\
\nonumber
& \quad\times\Bigg(\1_{\{q\leq \frac{3}{4}t\}}+\1_{\{q\geq \frac{3}{4}t\}}\P\left\{\xi^q(s)\leq 8(s\wedge(q-s))^\g,  \forall s\in[B^{1/\g}\lor r,q-B^{1/\g}\wedge r]\right\}\Bigg)
,
\end{align}
where $K = \sum_{k=1}^\infty k(k-1) p_k$. It is in \eqv(rho.18) that we need the second
moment assumption on the distribution $(p_k)_{k \geq 0}$,
cf.~Footnote~\ref{note:3}. First, observe that, for $B<t^\g/3$, as in
\eqv(rho.11), the probability in \eqv(rho.18) is bounded from above by
$
\frac{2 (B^{1/\g}\lor r)}{q-2(B^{1/\g}\lor r)}(1+\e)
$.
Observe that $m(t)-A-x\leq m(t)-A-U_{t,\g}(q)+B $. We compute first the
integrals with respect to $y$ and $y'$ in \eqv(rho.18), i.e.,
\begin{align}\Eq(rho.21)
\int_{-\infty}^{\DD_{A,B,q}}\int_{-\infty}^{\DD_{A,B,q}}\eee^{\s(2x+y+y'-2m(t))+i\rho\tau(y'-y)}\eee^{-\frac{y^2+y'^2}{2(t-q)}}\frac{\ddd y \ddd y'}{2\pi(t-q)},
\end{align}
where $\DD_{A,B,q} := m(t)-A-U_{t,\g}(q)+B$.
We make the following change of variables
\begin{align}\Eq(rho.22')
y=w+\lambda (t-q)\quad\mbox{and}\quad y'=w'+\overline{\lambda} (t-q).
\end{align}
Hence, \eqv(rho.21) is equal to
\begin{align}\Eq(rho.22)
\eee^{2\s(x-m(t))+(\s^2-(\rho\t)^2)(t-q)}\int_{-\infty}^{\DD_{A,B,q}-\lambda(t-q)}\int_{-\infty}^{\DD_{A,B,q}-\overline{\lambda}(t-q)}\eee^{-\frac{w^2+w'^2}{2(t-s)}}\sfrac{\ddd w \ddd w'}{2\pi(t-q)}
.
\end{align}
Using \eqv(Gauss.tail), we bound 
\eqv(rho.22) from above by
\begin{multline}
\Eq(rho.23)
\eee^{2\s(x-m(t))+(\s-\t^2)(t-q)}\Big(\1_{\{\DD_{A,B,q}\geq  \s(t-q)\}}\\
+
\exp\left(-\sfrac{\left(\DD_{A,B,q}-\l(t-q)\right)^2+\left(\DD_{A,B,q}-\overline{\l}(t-q)\right)^2}{2(t-q)}\right)
\1_{\{\DD_{A,B,q}\leq \s(t-q)\}}\Big).
\end{multline}
Next we carry out the integration over $x$ in \eqv(rho.18).
Note that 
\begin{align}\Eq(rho.24)
\int_{U_{t,\g}(q)-B}^{U_{t,\g}(q)-B+1}\eee^{2\s x}\eee^{-\frac{x^2}{2q}}\frac{\ddd x}{\sqrt{2\pi q}} 
=
\eee^{2\s^2 q}\int_{U_{t,\g}(q)-B-2\s q}^{U_{t,\g}(q)-B+1-2\s q}\eee^{-\frac{v^2}{2q}}\frac{\ddd v}{\sqrt{2\pi q}},
\end{align}
where we made the change of variables $x=v+2\s q$. Observe that $U_{t,\g}(q)-2\s
q\leq (\sqrt 2-2\s)q<0$, since $\s\geq \frac{1}{\sqrt 2}$. Therefore, using \eqv(Gauss.tail), the right-hand side of 
\eqv(rho.24) is bounded from above by
\begin{align}\Eq(rho.25)
\frac{\sqrt{q}}{2\s q-U_{t,\g}(q)+B}\eee^{2\s^2 q}\eee^{-(U_{t,\g}(q)-B-2\s q)^2/2q} 
.
\end{align}
Using the bounds \eqv(rho.25) and \eqv(rho.23) in \eqv(rho.18), we get that
\eqv(rho.18) is bounded from above by
\begin{align}\Eq(rho.26)
&K\int_0^t 
\frac{\sqrt{q}\eee^{2t-q+2\s^2 q}\eee^{-(U_{t,\g}(q)-B-2\s q)^2/2q}}{2\s q-U_{t,\g}(q)+B}
\eee^{-2\s m(t)+(\s^2-\t^2)(t-q)}\nonumber\\
&\quad \times\Big(\1_{\{\DD_{A,B,q}\geq \sigma (t-q)\}}
+
\eee^{-\frac{\left(\DD_{A,B,q}-\l(t-q)\right)^2+\left(\DD_{A,B,q}-\overline{\l}(t-q)\right)^2}{2(t-q)}}\1_{\{\DD_{A,B,q}\geq \sigma (t-q)\}}\Big)
\nonumber\\
&\quad \times 
\left(\1_{\{q\leq \frac{3}{4}t\}}+\1_{\{q\geq \frac{3}{4}t,\ B<t^\g/3\}}\sfrac{2(B^{1/\g}\lor r)}{q-2(B^{1/\g}\lor r)}(1+\e)\right)
\ddd q
.
\end{align}
Using that $U_{t,\g}(q)-2\s q=(\sqrt 2-2\s) q-\frac{q}{t}\frac{3}{2\sqrt 2}\log
t+(q\wedge (t-q))^\g$, we start to simplify \eqv(rho.26). We
get
\begin{align}\Eq(rho.40)
& \eee^{2t-q}\eee^{2\s^2 q}\eee^{-(U_{t,\g}(q)-B-2\s q)^2/2q}
\eee^{-2\s m(t)+\frac{(\sigma^2 - \tau^2)(t-q)}{2}}\nonumber\\
& \underset{t \uparrow \infty}{\sim} 
\eee^{(t-q)\left((\s-\sqrt 2)^2-\t^2\right)+\left(\frac{3\s}{\sqrt 2}+\frac{(\sqrt 2-2\s)3q}{2\sqrt 2 t}\right)\log t-(\sqrt2 -2\s)(q\wedge (t-q))^\g+(\sqrt 2-2\s)B}
.
\end{align}
Note that by assumption on $\s$ and $\t$ we have $(\s-\sqrt 2)^2-\t^2<0$ and
$\sqrt 2-2\s<0$. Cutting the domain of integration in \eqv(rho.26) into three
parts $q\in[0,t-\log(t)^\a]$, $q\in( t-\log(t)^\a,t-\frac{A}{2}]$ and
$q\in(t-\frac{A}{2},t]$, for some fixed $\a>1$, we get the following three terms
\begin{align}\Eq(rho.51)
K\int_{0}^{t} \dots \ddd q=K \left( \int_{0}^{t-\log(t)^\a} + \int_{t-\log(t)^\a}^{t-\frac{A}{2}} + \int_{t-\frac{A}{2}}^{t} \right) \dots \ddd q =: K\left(\text{(I1)}+\text{(I2)}+\text{(I3)}\right).
\end{align}
We bound (I1) from above by
\begin{align}\Eq(rho.41)
&\int_0^{t-\log(t)^\a}\eee^{(t-q)\left((\s-\sqrt 2)^2-\t^2\right)+\left(\frac{(\sqrt 2-2\s)3q}{2\sqrt 2 t}+\frac{3\s}{\sqrt 2}\right)\log t-(\sqrt2 -2\s)(q\wedge (t-q))^\g+(\sqrt 2-2\s)B}\ddd q (1+o(1)) 
\nonumber\\
&\leq\eee^{(\sqrt 2-2\s)B+\frac{3\s}{\sqrt 2}\log t}\int_0^{t-\log(t)^\a}\eee^{(t-q)\left((\s-\sqrt 2)^2-\t^2\right)-(\sqrt2 -2\s)(q\wedge (t-q))^\g }\ddd q (1+o(1)) 
\nonumber\\
&\leq  \eee^{(\sqrt 2-2\s)B}\eee^{C\log(t)^\a \left((\s-\sqrt 2)^2-\t^2\right)+\frac{3\s}{\sqrt 2}\log t-(\sqrt2 -2\s)\log (t)^{\g\a}},
\quad t \uparrow \infty,
\end{align}
for some constant $C>0$. Hence,
\begin{align}\Eq(rho.sum1)
K\sum_{B>1}\text{(I1)}
\leq K \eee^{C\log(t)^\a \left((\s-\sqrt 2)^2-\t^2\right)+\frac{3\s}{\sqrt 2}\log t-(\sqrt2 -2\s)\log (t)^{\g\a}} \sum_{B>1}\eee^{(\sqrt 2-2\s)B},
\end{align}
since $\sqrt 2-2\s<0$, we have $\sum_{B>1}\eee^{(\sqrt 2-2\s)B}<\infty$. Hence,
we can choose $t_0$ such that, for all $t>t_0$, the r.h.s.\ of \eqv(rho.sum1) less than $\frac{\e}{6}$.
For $q\in( t-\log(t)^\a,t]$, we observe first that
\begin{align}\Eq(rho.42)
\eee^{\left(\frac{(\sqrt 2-2\s)3q}{2\sqrt 2 t}+\frac{3\s}{\sqrt 2}\right)\log t}
\underset{t \uparrow \infty}{\sim}
\eee^{\frac{3}{2}\log t}
,
\end{align} 
and, moreover,
\begin{align}\Eq(rho.43)
\frac{2\sqrt{q}(B^{1/\g}\lor r)}{\left(2\s q-U_{t,\g}(q)+B\right)\left(q-2(B^{1/\g}\lor r)\right)}\leq C' \frac{2(B^{1/\g}\lor r)}{\sqrt{t}(t-2(B^{1/\g}\lor r))},
\end{align}
for some constant $C'>0$. Using \eqv(rho.42) and \eqv(rho.43), we bound (I2)
from above by
\begin{multline}
\Eq(rho.44)
\int_{t-\log(t)^\a}^{t-\frac{A}{2}}\eee^{(t-q)\left((\s-\sqrt 2)^2-\t^2\right) -(\sqrt2 -2\s) (t-q)^\g+(\sqrt 2-2\s)B}C't  \ddd q
\\
\times \left(\sfrac{2(B^{1/\g}\lor r)}{(t-2(B^{1/\g}\lor r))}\1_{\{B<t^\g/3\}}+\1_{\{B\geq t^\g/3\}}\right) (1+o(1))
\\
\leq  C_2\eee^{ \frac{A}{2}((\s-\sqrt 2)^2-\t^2)}\eee^{(\sqrt 2-2\s)B}\left((B^{1/\g}\lor r)\1_{\{B<t^\g/3\}}+t\1_{\{B\geq t^\g/3\}}\right)(1+o(1))
,
\end{multline}
as $t \uparrow \infty$. Using \eqv(rho.44), we get that $K\sum_{B>1}\text{(I2)}$
is bounded from above by
\begin{align}\Eq(rho.sum2)
KC_2\eee^{ \frac{A}{2}((\s-\sqrt 2)^2-\t^2)}\left(\sum_{B=1}^{[t^\g/3]} \eee^{(\sqrt 2-2\s)B}(B^{1/\g}\lor r)
+\sum_{B>[t^\g/3]} \eee^{(\sqrt 2-2\s)B}t \right)(1+o(1))
,
\end{align}
as $t \uparrow \infty$. Again, since $2-2\s<0$, we have $\sum_{B>1}
B^{\frac{1}{\g}}\eee^{(\sqrt 2-2\s)B} <\infty$ and $(\s-\sqrt 2)^2-\t^2<0$.
Hence, there exist $t_1$ and $A_1$ such that, for all $t>t_1$ and all $A>A_1$, we
have that \eqv(rho.sum2)$\leq \frac{\e}{6}$. %Cutting the domain again into two
Since $\DD_{A,B,q}-\s(t-q)<0$ for $t-q\leq \frac{A}{\sqrt 2}$ and $B\leq
\frac{A}{2}$, we bound (I3) from above by
\begin{align}\Eq(rho.46)
&\int_{t-\frac{A}{2}}^t\eee^{(t-q)\left((\s-\sqrt 2)^2-\t^2\right)} \eee^{ -(\sqrt2 -2\s) (t-q)^\g+(\sqrt 2-2\s)B}C't\left(\sfrac{2(B^{1/\g}\lor r)}{(t-2(B^{1/\g}\lor r))}\1_{\{B < t^\g/3\}}+\1_{\{B \geq t^\g/3\}}\right)\nonumber\\
&\times  
\Big(\1_{\{B<\frac{A}{2}\}}\eee^{-\frac{ \left( (1-\sqrt 2 \s)A\right)^2 }{(t-q)}}  (1+o(1))+\1_{\{B\geq \frac{A}{2}\}}\Big)\ddd q, 
\quad t \uparrow \infty
.
\end{align}
Using that $(\s-\sqrt 2)^2-\t^2<0$ and $\sqrt2-2\s<0$, we bound \eqref{rho.46}
from above by
\begin{align}\Eq(rho.60)
&\int_{t-\frac{A}{2}}^t \eee^{ -(\sqrt2 -2\s) (\frac{A}{2})^\g+(\sqrt 2-2\s)B}\tilde C \left(\1_{\{B<t/3\}}2(B^{1/\g}\wedge r)+t\1_{\{B\geq t/3\}}\right) \nonumber\\
&\quad \times  
\Big(\1_{\{B<\frac{A}{2}\}}\eee^{-\frac{ \left( (1-\sqrt 2 \s)A\right)^2 }{A/2 }}  (1+o(1))+\1_{\{ B\geq \frac{A}{2}\}}\Big)\ddd q\nonumber\\
&\leq \frac{A}{2}\eee^{ -(\sqrt2 -2\s) (\frac{A}{2})^\g+(\sqrt 2-2\s)B}\tilde C \left(\1_{\{ B<t^\g/3 \}}2(B^{1/\g}\wedge r)+t\1_{\{B\geq t^\g/3\}}\right) \nonumber\\
&\quad \times  
\Big(\1_{\{B<\frac{A}{2}\}}\eee^{-\frac{ \left( (1-\sqrt 2 \s)A\right)^2 }{A/2}}  (1+o(1))+\1_{\{ B\geq \frac{A}{2}\}}\Big),
\quad t \uparrow \infty.
\end{align}
Using \eqv(rho.60), together with the fact that, for all $t>\frac{3A^\g}{2}$, it
holds that $\frac{t^\g}{3}>\frac{A}{2}$, we get that, for all such $t$, the sum
$K\sum_{B>1}\text{(I3)}$ is bounded from above by
\begin{align}\Eq(rho.sum3)
&K\tilde C\frac{A}{2}\eee^{ -(\sqrt2 -2\s) \left(\frac{A}{2}\right)^\g}
\Big(
\sum_{B>1}^{A/2}\eee^{(\sqrt 2-2\s)B}
\eee^{-\frac{ 2\left((1-\sqrt 2 \s)A\right)^2 }{A}}  (B^{1/\g}\lor r)\nonumber\\
&
+
\sum_{B>A/2}^{t^\g/3} \eee^{(\sqrt 2-2\s)B}(B^{1/\g}\lor r)
+ \sum_{B>t^\g/3}t\eee^{(\sqrt 2-2\s)B}\Big)(1+o(1)), \quad t \uparrow \infty.
\end{align}
Hence, there exist $t_2$ and $A_2$ such that for all $t>t_2$ and $A>A_2$ the
term in $\eqv(rho.sum3)$ is not greater than $\frac{\e}{6}$. Now, combining the bounds in
\eqv(rho.sum1), \eqv(rho.sum2) and \eqv(rho.sum3), we get that, for all
$t>\max\{t_0,t_1,t_2\}$ and $A>\max\{A_1,A_2\}$,
$
\sum_{B\geq 1}T(B,t)\leq \frac{\e}{6}+\frac{\e}{6}+\frac{\e}{6}=\frac{\e}{2}
$.
By \eqv(rho.3), this concludes the proof of Proposition~\thv(Prop.nocont).
\end{proof}

\appendix

\section{Martingale convergence}
\label{sec:martingale-convergence}

For $\b=\s+i\t$, set 
$
M_{\b}(t) := \eee^{-t\left(1+\frac{\s^2}{2}-\frac{\t^2}{2}+i\rho\tau\right)}\sum_{k=1}^{n(t)}\eee^{\s x_k(t)+i\t y_k(t)}.
$

\begin{proposition}\TH(Prop.martingale)
For $\b \in \C$ with $|\beta|<1$, $M_{\b}(t)$ is an $L^2$-bounded
martingale with expectation one. In particular, $M_{\b}(t)$ converges to a non-degenerate limit $M_\b$ a.s.\ and in $L^2$
as t tends to infinity.
\end{proposition}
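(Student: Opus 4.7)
My plan is a martingale property verification followed by a uniform second-moment bound, following the classical BBM multiplicative chaos argument adapted to the complex correlated setting via \eqv(cor.1). For the martingale property and $\E[M_\b(t)] = 1$, I would condition on the tree $\FF^{\mathbb{T}_t}$ and invoke the branching property: given $\FF^{\mathbb{T}_t}$, the process on $[t, t+s]$ is a superposition of independent copies of the complex BBM energy model, each started from $(x_k(t), y_k(t))$. Combined with the elementary single-particle moment $\E[\eee^{\s x_1(s) + i\t y_1(s)}] = \exp(s(\s^2/2 - \t^2/2 + i\s\t\r))$ (from the joint Gaussian MGF, using $\Cov(x_1(s), y_1(s)) = \r s$ from \eqv(cor.1)) and $\E[n(s)] = \eee^s$, this yields $\E[\wt\XX_{\b,\r}(t+s) \mid \FF^{\mathbb{T}_t}] = \wt\XX_{\b,\r}(t) \cdot \exp(s(1 + \s^2/2 - \t^2/2 + i\s\t\r))$. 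The exponential prefactor in the definition of $M_\b$ is designed precisely to compensate, yielding the martingale identity and, at $t = 0$, also $\E[M_\b(t)] = 1$.

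For $L^2$-boundedness, I would expand
\begin{align*}
\E[|M_\b(t)|^2] = \eee^{-t(2+\s^2-\t^2)} \E\bigg[\sum_{k,l=1}^{n(t)} \eee^{\s(x_k(t)+x_l(t)) + i\t(y_k(t)-y_l(t))}\bigg]
\end{align*}
(the imaginary part of the normalization drops out when taking $|\cdot|^2$) and split the sum into $k = l$ and $k \neq l$. The diagonal contributes $\eee^t \cdot \eee^{2\s^2 t}$ by the many-to-one lemma, which after normalization becomes $\eee^{t(|\b|^2 - 1)} \leq 1$ whenever $|\b| < 1$. For $k \neq l$, apply the many-to-two formula for BBM (exactly as in the derivation of \eqv(rho.18)), conditioning on the common ancestor time $q = d(x_k(t), x_l(t))$: the shared path on $[0, q]$ is independent of the two independent BM increments on $[q, t]$, and a short Gaussian computation gives
\begin{align*}
\E\big[\eee^{\s(x_k(t)+x_l(t))+i\t(y_k(t)-y_l(t))} \mid q_{k,l}=q\big] = \eee^{2\s^2 q + (t-q)(\s^2 - \t^2)},
\end{align*}
where the key observation is that the imaginary cross term $i\s\t\r$ in each single-particle moment appears with opposite signs for $k$ and $l$ (because of the minus sign between $y_k$ and $y_l$) and hence cancels. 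The many-to-two formula then reduces the off-diagonal contribution to $K \int_0^t \eee^{q(|\b|^2 - 1)} \ddd q \leq K / (1 - |\b|^2)$, finite precisely when $|\b| < 1$.

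Combining both bounds yields $\sup_t \E[|M_\b(t)|^2] < \infty$, so the $L^2$-martingale convergence theorem gives $M_\b(t) \to M_\b$ a.s.\ and in $L^2$; non-degeneracy of $M_\b$ follows because $L^1$-convergence preserves the expectation, and $\E[M_\b] = 1 \neq 0$. There is no serious obstacle here; the only subtlety is the cancellation of the imaginary cross term in the off-diagonal computation, which is exactly what makes the subcriticality threshold a genuine disc $\{|\b| < 1\}$, independent of $\r$.
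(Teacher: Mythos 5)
Your proof is correct and follows the same overall strategy as the paper: verify the martingale property via the branching structure, then bound $\E[|M_\b(t)|^2]$ uniformly in $t$ via a second-moment (many-to-one/many-to-two) calculation, arriving at the integrand $\eee^{q(|\b|^2-1)}$ whose integral is bounded precisely when $|\b|<1$. The one organizational difference is in how you treat the $\rho$-correlation in the off-diagonal pair expectation. The paper first applies the decomposition $Y \overset{\mathrm{D}}{=} \rho X + \sqrt{1-\rho^2}\,Z$ from \eqv(cor.1), conditions on $\FF^{\mathbb{T}_t}$ to integrate out the independent BBM $z$, obtaining the factor $\eee^{-(1-\rho^2)\t^2(t-q)}$, and only afterwards carries out the Gaussian integrals over the $x$-variables with the modified parameter $\l = \s+i\rho\t$; the $\rho$-dependence then cancels in \eqv(a.6). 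You instead compute the pair expectation directly from the joint bivariate Gaussian MGF of $(x_j(t)-x(q),\,y_j(t)-y(q))$ on each branch, observing that the imaginary cross term $\pm i\s\t\rho(t-q)$ cancels between branches $k$ and $l$ and that the shared ancestral path drops out of $y_k(t)-y_l(t)$. Both routes give $\eee^{2\s^2 q+(t-q)(\s^2-\t^2)}$ and hence $\int_0^t \eee^{q(|\b|^2-1)}\,\ddd q$; yours is slightly more direct and makes the $\rho$-independence of the threshold transparent at the level of a single computation, while the paper's presentation is consistent with its use of \eqv(cor.1) throughout Section~3. One small point in your favor: you explicitly separate the diagonal $k=l$ term and bound it via many-to-one, whereas the paper's \eqv(a.5) only displays the many-to-two integral; since both contributions are $O(1)$ under $|\b|<1$ the conclusion is unaffected, but your accounting is cleaner.
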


\begin{proof}
Using Representation \eqv(cor.1), one easily verifies that $\E[M_\b(t)]=1$ and
that it is indeed a martingale. It remains to show the $L^2$-boundedness of
$M_\b(t)$. We have
\begin{align}
\Eq(a.2)
\E\left[\vert M_\b(t) \vert^2\right]
=\eee^{-2t\left(1+\frac{\s^2}{2}-\frac{\t^2}{2}\right)}\E\left[ 
\sum_{k,l=1}^{n(t)} \eee^{\s (x_k(t)+x_l(t))+i\t (y_k(t)-y_l(t))}
\right].
\end{align}
Using Representation \eqv(cor.1), we rewrite the right-hand side of \eqv(a.2) as
\begin{align}
\Eq(a.3)
\eee^{-2t\left(1+\frac{\s^2}{2}-\frac{\t^2}{2}\right)}\E\left[ 
\sum_{k,l=1}^{n(t)} \eee^{\bar \l x_l(t)+\l x_k(t))+i\t (1-\rho^2) (z_k(t)-z_l(t))}
\right],
\end{align}
where $\l= \s+i\rho \t$ and $(z_k(t))_{k\leq n(t)}$ are the particles of a BBM
on $\mathbb{T}_t$ that is independent from $X(t)$. By conditioning on
$\mathcal{F}^{\mathbb{T}_t}$ as in \eqv(rho.15), we have that \eqv(a.3) is equal
to
\begin{align}
\Eq(a.4)
 \eee^{-2t\left(1+\frac{\s^2}{2}-\frac{\t^2}{2}\right)}\E\left[ \eee^{-(1-\rho^2)\t^2\left(t-d(x_k(t),x_l(t))\right)}
\sum_{k,l=1}^{n(t)} \eee^{\bar \l x_l(t)+\l x_k(t)}
\right].
\end{align}
Similarly to \eqv(rho.18), the expectation in \eqv(a.4) is equal to
\begin{align}\Eq(a.5)
&K \int_0^t \ddd q \eee^{2t-q-(1-\rho^2)\t^2(t-q)}\int_{-\infty}^{\infty} \frac{\ddd x}{\sqrt{2\pi q}} \int_{-\infty}^{\infty}\frac{\ddd y}{\sqrt{2\pi (t-q)}}\nonumber\\
&\quad\times \int_{-\infty}^{\infty}\frac{\ddd y'}{\sqrt{2\pi (t-q)}} \eee^{2\s x +\s(y+y')+i\t\rho (y-y')}\eee^{-\frac{y^2+y'^2}{2}}\eee^{-x^2/2}.
\end{align}
Computing first the integrals with respect to $y$ and $y'$, we get that \eqv(a.5) is equal to
\begin{align}\Eq(a.6)
&K \int_0^t \ddd q \eee^{2t-q-(1-\rho^2)\t^2(t-q)+(\s^2-\rho^2\t^2)(t-q)}\int_{-\infty}^{\infty} \frac{\ddd x}{\sqrt{2\pi q}}  \eee^{2\s x} \eee^{-x^2/2}\nonumber\\
&= K \int_0^t \ddd q \eee^{2t-q-\t^2(t-q)+\s^2(t-q)} \eee^{2\s^2q}.
\end{align}
Plugging \eqv(a.6) back into \eqv(a.4), we get that \eqv(a.4) is equal to
\begin{align}\Eq(a.7)
\eee^{-2t\left(1+\frac{\s^2}{2}-\frac{\t^2}{2}\right)}K \int_0^t \ddd q \eee^{2t-q-\t^2(t-q)+\s^2(t-q)} \eee^{2\s^2q}
=K \int_0^t \ddd q \eee^{q(\s^2+\t^2-1)}\leq C,
\end{align}
for some constant $C>0$ uniformly in $t$ since $\s^2+\t^2<1$ by assumption.
Hence, $M_\b (t) $ is an $L^2$-bounded martingale with expectation one and
converges as $t\uparrow \infty$ to a non-degenerate limit a.s.~and in $L^2$.

\end{proof}

\ACKNO{We thank Anton~Bovier, Patrik~Ferrari, \mbox{Zakhar}~Kabluchko and the anonymous referee for useful remarks.}

\end{document}